\documentclass[11pt]{article}

\usepackage[a4paper,margin=1in]{geometry}
\usepackage[T1]{fontenc}
\usepackage[utf8]{inputenc}
\usepackage{lmodern}
\usepackage{amsmath,amssymb,amsthm,mathtools,bm,mathrsfs}
\usepackage{booktabs,array,longtable}
\usepackage{float}
\usepackage{graphicx}
\usepackage{tikz}
\usetikzlibrary{calc,positioning,arrows.meta}
\usepackage{xcolor}
\usepackage{enumitem}
\usepackage{microtype}
\usepackage[unicode]{hyperref}
\usepackage{cleveref}
\usepackage{authblk}

\hypersetup{
  colorlinks=true,
  linkcolor=blue!55!black,
  citecolor=blue!55!black,
  urlcolor=blue!55!black,
  pdftitle={PH2T-splines, Part I: A Reasonable Mesh Assumption},
  pdfauthor={Bingru Huang and Yue Xi},
  pdfsubject={Dimensional stability of highest-smoothness splines on hierarchical T-meshes},
  pdfkeywords={hierarchical T-mesh, polynomial spline, dimensional stability, template refinement}
}

\newtheorem{theorem}{Theorem}[section]
\newtheorem{proposition}[theorem]{Proposition}

\newtheorem{corollary}[theorem]{Corollary}

\newtheorem{assumption}[theorem]{Assumption}
\theoremstyle{definition}
\newtheorem{definition}[theorem]{Definition}
\newtheorem{example}[theorem]{Example}
\theoremstyle{remark}

\crefname{theorem}{Theorem}{Theorems}
\Crefname{theorem}{Theorem}{Theorems}
\crefname{proposition}{Proposition}{Propositions}
\Crefname{proposition}{Proposition}{Propositions}
\crefname{lemma}{Lemma}{Lemmas}
\Crefname{lemma}{Lemma}{Lemmas}
\crefname{corollary}{Corollary}{Corollaries}
\Crefname{corollary}{Corollary}{Corollaries}
\crefname{conjecture}{Conjecture}{Conjectures}
\Crefname{conjecture}{Conjecture}{Conjectures}
\crefname{assumption}{Assumption}{Assumptions}
\Crefname{assumption}{Assumption}{Assumptions}
\crefname{definition}{Definition}{Definitions}
\Crefname{definition}{Definition}{Definitions}
\crefname{remark}{Remark}{Remarks}
\Crefname{remark}{Remark}{Remarks}
\crefname{example}{Example}{Examples}
\Crefname{example}{Example}{Examples}

\newcommand{\T}{\mathscr T}
\newcommand{\Q}{\mathbb Q}
\newcommand{\R}{\mathbb R}
\newcommand{\rank}{\operatorname{rank}}
\newcommand{\chl}{\operatorname{chl}}
\newcommand{\cR}{\mathcal R}
\newcommand{\cQ}{\mathcal Q}
\newcommand{\PHtwoT}{\mathrm{PH}^{2}\mathrm{T}}

\setlist{itemsep=2pt,topsep=4pt}
\title{\Large\bfseries $\PHtwoT$-splines, Part I:\\[3pt]
A Reasonable Mesh Assumption}
\author[1]{Bingru Huang\thanks{Corresponding author. E-mail: \href{mailto:hbr999@ustc.edu.cn}{\texttt{hbr999@ustc.edu.cn}}.}}
\author[1]{Yue Xi}
\affil[1]{School of Mathematical Sciences, University of Science and Technology of China, Hefei 230026, P.R. China}
\date{}

\begin{document}
\maketitle

\begin{abstract}
This paper is the first in a three-part series on the construction of
polynomial splines with the highest order of smoothness over hierarchical
T-meshes, referred to as $\PHtwoT$-splines. For splines of bi-degree
$(d,d)$, we study suitable refinement conditions for the subsequent
basis construction, which requires dimensional stability of the
underlying spline space. We present two groups of examples, considering
unrestricted hierarchical refinement and refinement without vanishable
T $l$-edges, respectively. The first setting permits new edges without
additional degrees of freedom. In the second group, every refinement
level excludes vanishable T $l$-edges and increases the dimension.
Nevertheless, the dimension is unstable in both groups. We then
introduce template translations to describe each refined region as a
union of translates of a fixed template in the cell-index grid.
Together with the known stability result under
$(d-1)\times(d-1)$ template refinement, these examples justify this
condition as a reasonable mesh assumption for the subsequent
$\PHtwoT$-spline construction.
\end{abstract}
\noindent\textbf{Keywords.} hierarchical T-mesh; smoothing cofactor method; dimensional stability.

\medskip
\noindent\textbf{MSC 2020.} 65D07, 65D17, 41A15.

\section{Introduction}

Non-uniform rational B-splines (NURBS) are widely used in computer-aided
geometric design and industrial CAD systems. They provide a unified
representation of curves and surfaces, exact representations of conic
sections, affine invariance, and efficient evaluation algorithms
\cite{PieglTiller1997}. In isogeometric analysis, spline functions
are used both to represent the geometry and to construct the
approximation spaces, thereby reducing the gap between geometric
modeling and numerical analysis
\cite{HughesCottrellBazilevs2005,CottrellHughesBazilevs2009}.

The tensor-product structure of NURBS, however, limits local
refinement. Inserting a knot in one parameter direction introduces a
grid line across the entire tensor-product patch, which may add many
degrees of freedom outside the region to be refined. Several locally
refinable spline constructions have been developed to overcome this
limitation. These include T-splines and their local refinement
algorithms \cite{SederbergEtAl2003,SederbergEtAl2004},
analysis-suitable and analysis-suitable++ T-splines
\cite{ScottEtAl2012,ZhangLi2018}, hierarchical and truncated
hierarchical B-splines
\cite{ForseyBartels1988,VuongEtAl2011,GiannelliJuettlerSpeleers2012},
locally refined splines \cite{DokkenLychePettersen2013}, and polynomial
splines over hierarchical T-meshes \cite{DengEtAl2008}. A review of
these constructions can be found in \cite{LiEtAl2016Survey}.

Hierarchical B-splines (HB-splines) are constructed by selecting
tensor-product B-splines level by level from nested spline spaces
according to prescribed refinement regions. The resulting functions
are linearly independent and retain the computational advantages of
standard B-splines \cite{ForseyBartels1988,VuongEtAl2011}. Their
completeness in the full piecewise polynomial space on the hierarchical
mesh requires further analysis. Dimension formulas on multi-cell and
multi-grid domains, together with sufficient conditions for HB
completeness based on successive refinement rings, were established
in \cite{GiannelliJuettler2013}. Truncated hierarchical B-splines
(THB-splines) form a partition of unity and reduce the supports of
coarse-level functions while preserving the span of the HB-splines
\cite{GiannelliJuettlerSpeleers2012}. Thus, truncation does not change
whether the hierarchical space coincides with the full spline space.

Polynomial splines over T-meshes are defined directly by prescribing
the polynomial bi-degree on each cell and the smoothness across
interior edges \cite{DengChenFeng2006,DengEtAl2008}. This definition
does not depend on a particular generating system. Two fundamental
problems are to calculate the dimension of the spline space and to
construct a basis. For bicubic splines with $C^{1,1}$ continuity,
the PHT-spline construction provides locally refinable bases over
hierarchical T-meshes \cite{DengEtAl2008}. In this paper, we are
concerned with the spline space $S_d(\T)$ of bi-degree $(d,d)$ with
the highest order of smoothness $C^{d-1,d-1}$ over a hierarchical
T-mesh $\T$.

Several methods have been developed to calculate the dimensions of
spline spaces over T-meshes. Early dimension formulas were obtained
by the B-net method under restrictions on the degree and smoothness
\cite{DengChenFeng2006}, and were further improved by the smoothing
cofactor method \cite{LiWangZhang2006}. The latter relates the
dimension calculation to a system of conformality equations
associated with the mesh \cite{LiDeng2016}. The homology method
provides another dimension formula, which contains a term that
cannot in general be determined from the topological information
alone \cite{Mourrain2014}.

When the smoothness order is close to the polynomial degree, the
dimension of the spline space may not be stable; that is, it may
depend not only on the topological information but also on the
geometric information of the T-mesh. This phenomenon was first
observed in \cite{LiChen2011}, and further examples were provided
in \cite{BerdinskyEtAl2012}. The instability of the dimension over
T-meshes containing T-cycles and nested T-cycles was subsequently
studied in \cite{GuoWangLi2015,LiWang2019}.

These observations suggest studying restricted classes of T-meshes
over which the dimensions of spline spaces are stable. For
hierarchical T-meshes, a dimension formula for biquadratic spline
spaces was obtained by the space embedding method
\cite{DengChenJin2013}. The highest-smoothness case over
$(m,n)$-subdivision T-meshes was studied in \cite{WuDengChen2013},
and biquadratic and bicubic spaces over hierarchical T-meshes with
fewer refinement restrictions were considered in \cite{ZengEtAl2015}.
Another important class is diagonalizable T-meshes, whose T $l$-edges
admit a suitable ordering. The dimension of a spline space over such
a mesh is determined by its topological information
\cite{LiDeng2016}.

For the spline space $S_d(\T)$, a decomposition of the T-connected
component into a diagonalizable component and a completely
non-diagonalizable component was developed in \cite{HuangChen2024}.
It was proved that the stability of the dimension depends only on
the stability of the rank of a reduced matrix corresponding to the
multi-vertices of the completely non-diagonalizable component.
A subsequent study calculated the dimensions of conformality vector
spaces over tensor-product T-connected components and established
a levelwise dimension formula under suitable hierarchical
subdivisions \cite{HuangChen2026}. In particular, for $d\ge3$, the dimension of
$S_d(\T)$ is stable when each refinement level cross-subdivides a
collection of $(d-1)\times(d-1)$ tensor-product cell blocks, with
partial overlaps permitted. This result provides the
dimension-theoretic foundation for the present work.

This paper is the first in a three-part series on the construction
of $\PHtwoT$-splines. We study suitable refinement
conditions for constructing a basis of the full spline space
$S_d(\T)$ over hierarchical T-meshes. A necessary requirement for
the proposed mesh-based construction is the stability of the
dimension. We examine hierarchical refinements with and without
vanishable T $l$-edges through four examples, including the biquartic
example in \cite{HuangChen2026}. The dimension calculations show
that neither refinement condition is sufficient for stability.
Together with the known dimension formula, these results motivate
the $(d-1)\times(d-1)$ template assumption for the subsequent
$\PHtwoT$-spline construction. The main
contributions are as follows.
\begin{itemize}
    \item
    The notion of construction-suitable T-meshes is introduced, and
    dimensional stability is proved to be a necessary condition
    for a basis construction compatible with the mesh structure.

    \item
    Two groups of examples show that neither unrestricted
    hierarchical refinement for $d=3,4$ nor refinement without
    vanishable T $l$-edges for $d=5,6$ guarantees dimensional
    stability. In the latter group, each refinement level strictly
    increases the dimension.

    \item
    Template translations are introduced to describe the refined
    regions, and $\lceil(d+1)/2\rceil$ is shown to be the smallest
    square-template side that uniformly excludes vanishable T
    $l$-edges. For $d=5,6$, the resulting
    $(d-2)\times(d-2)$ template condition is nevertheless
    insufficient for dimensional stability.
\end{itemize}

The rest of the paper is organized as follows.
Section~\ref{sec:prelim} recalls the basic notions of T-meshes and
spline spaces. Section~\ref{sec:construction-suitable} introduces
construction-suitable T-meshes and establishes the necessary condition
of dimensional stability. Section~\ref{sec:instability-assumption}
presents the four instability examples and develops the resulting
mesh assumption. Section~\ref{sec:conclusion} concludes the paper.
The appendices provide the conformality matrix reductions, the proofs
of the dimension results, and the proofs of the template conditions.

\section{Preliminaries}\label{sec:prelim}

In this section, we recall some basic notions of T-meshes, hierarchical
T-meshes and spline spaces. We also review the smoothing cofactor
equations used below and the definition of dimensional stability.

\subsection{T-meshes and hierarchical T-meshes}

\begin{definition}[T-mesh; \cite{DengEtAl2008,HuangChen2024}]
\label{def:tmesh}
A T-mesh $\T$ is a finite collection of closed axis-aligned rectangles with
pairwise disjoint interiors and connected union. Each rectangle is
called a \emph{cell}, and their union is denoted by $\Omega(\T)$.
If $\Omega(\T)$ is a rectangle, then $\T$ is called a
\emph{regular T-mesh}.
\end{definition}

Throughout this paper, we consider regular T-meshes. Vertices and
edges are classified as boundary or interior according to their
positions. An interior vertex at which three edges meet is called a
\emph{T-node}, whereas one at which four edges meet is called a
\emph{cross-vertex}.

An \emph{$l$-edge} is a maximal line segment composed of collinear
mesh edges. An $l$-edge on the boundary of $\Omega(\T)$ is called a
\emph{boundary $l$-edge}. An interior $l$-edge is a \emph{cross-cut}
if both endpoints are boundary vertices, a \emph{ray} if exactly one
endpoint is a boundary vertex, and a \emph{T $l$-edge} if both
endpoints are T-nodes. The union of all T $l$-edges and their vertices
is called the \emph{T-connected component}, denoted by $L(\T)$
\cite{LiDeng2016,HuangChen2024}. If it is not connected, its connected
components can be considered separately. A vertex on two T $l$-edges
is called a \emph{multi-vertex}; any other vertex on $L(\T)$ is called
a \emph{mono-vertex}. We denote the number of vertices on an $l$-edge
$\ell$ by $n(\ell)$.

\begin{example}\label{ex:tedges}
In Figure~\ref{fig:tedges}, $c$ is a cross-cut, $r$ is a ray, and
$\ell_h$ and $\ell_v$ are T $l$-edges. The vertex $v$ is a
multi-vertex, whereas $w$ is a mono-vertex.
\end{example}

\begin{figure}[htbp]
\centering
\begin{tikzpicture}[scale=.83,line cap=round,line join=round]
  \draw[line width=.6pt] (0,0) rectangle (7,5);
  \draw[line width=.6pt] (0,2.7)--(7,2.7);
  \draw[line width=.6pt] (0,4.3)--(7,4.3);
  \draw[line width=.6pt] (2,0)--(2,5);
  \draw[line width=.6pt] (5,0)--(5,5);

  \draw[line width=1.05pt] (0,1)--(7,1);
  \draw[line width=1.05pt] (4.5,0)--(4.5,2.7);
  \draw[line width=1.2pt] (2,3.5)--(5,3.5);
  \draw[line width=1.2pt] (3.5,2.7)--(3.5,4.3);

  \fill (2,3.5) circle (1.8pt);
  \fill (5,3.5) circle (1.8pt);
  \fill (3.5,2.7) circle (1.8pt);
  \fill (3.5,4.3) circle (1.8pt);
  \fill (3.5,3.5) circle (2.5pt);

  \node[fill=white,inner sep=1pt] at (1.05,.78) {$c$};
  \node[fill=white,inner sep=1pt] at (4.83,1.72) {$r$};
  \node[fill=white,inner sep=1pt] at (4.72,3.76) {$\ell_h$};
  \node[fill=white,inner sep=1pt] at (3.78,4.05) {$\ell_v$};
  \node[fill=white,inner sep=1pt] at (3.72,3.28) {$v$};
  \node[fill=white,inner sep=1pt] at (2.22,3.78) {$w$};
\end{tikzpicture}
\caption{A T-mesh.}
\label{fig:tedges}
\end{figure}

A \emph{hierarchical T-mesh} is generated from a tensor-product mesh
by successive local subdivisions \cite{DengEtAl2008,HuangChen2026}.
The initial tensor-product mesh is of level zero. At each subsequent
level, some cells of level $k-1$ are subdivided, and the newly generated
cells, edges and vertices are assigned level $k$. The cells not
selected for subdivision remain unchanged. In this paper, we use the
\emph{cross subdivision mode}, in which a cell is divided into
$2\times2$ equal subcells by its horizontal and vertical midlines.

Let $\T^k$ denote the T-mesh after the subdivisions up to level $k$.
The resulting sequence is written as
\[
  \T^0,\T^1,\ldots,\T^N=\T,
\]
where $N=\operatorname{lev}(\T)$ is the level of the hierarchical
T-mesh. The collection of cells subdivided from $\T^{k-1}$ to
$\T^k$ determines the refined region at level $k$.

\begin{example}\label{ex:hierarchical}
Figure~\ref{fig:hierarchical} shows a hierarchical T-mesh of level two.
The central cell $[1,2]^2$ of $\T^0$ is subdivided to obtain $\T^1$,
and the cell $[3/2,2]^2$ is then subdivided to obtain $\T^2$.
The new line segments of levels one and two are shown as solid and
dashed lines, respectively.
\end{example}

\begin{figure}[htbp]
\centering
\begin{tikzpicture}[scale=.86,line cap=round,line join=round]
  \begin{scope}
    \foreach \x in {0,1,2,3}
      {\draw[line width=.55pt] (\x,0)--(\x,3);}
    \foreach \y in {0,1,2,3}
      {\draw[line width=.55pt] (0,\y)--(3,\y);}
    \node at (1.5,-.35) {(a) $\T^0$};
  \end{scope}

  \begin{scope}[shift={(4.4,0)}]
    \foreach \x in {0,1,2,3}
      {\draw[line width=.55pt] (\x,0)--(\x,3);}
    \foreach \y in {0,1,2,3}
      {\draw[line width=.55pt] (0,\y)--(3,\y);}
    \draw[line width=.95pt] (1.5,1)--(1.5,2);
    \draw[line width=.95pt] (1,1.5)--(2,1.5);
    \node at (1.5,-.35) {(b) $\T^1$};
  \end{scope}

  \begin{scope}[shift={(8.8,0)}]
    \foreach \x in {0,1,2,3}
      {\draw[line width=.55pt] (\x,0)--(\x,3);}
    \foreach \y in {0,1,2,3}
      {\draw[line width=.55pt] (0,\y)--(3,\y);}
    \draw[line width=.95pt] (1.5,1)--(1.5,2);
    \draw[line width=.95pt] (1,1.5)--(2,1.5);
    \draw[line width=.95pt,dashed] (1.75,1.5)--(1.75,2);
    \draw[line width=.95pt,dashed] (1.5,1.75)--(2,1.75);
    \node at (1.5,-.35) {(c) $\T^2$};
  \end{scope}
\end{tikzpicture}
\caption{A hierarchical T-mesh.}
\label{fig:hierarchical}
\end{figure}

\subsection{Spline spaces and smoothing cofactors}

\begin{definition}[Spline space; \cite{DengEtAl2008,HuangChen2024}]
\label{def:spline-space}
Let $\T$ be a regular T-mesh with domain $\Omega$, and let $d\ge1$.
The spline space of bi-degree $(d,d)$ with the highest order of
smoothness over $\T$ is defined by
\begin{equation}\label{eq:spline-space}
  S_d(\T)
  =\left\{
    f\in C^{d-1,d-1}(\Omega):
    f|_Q\in\mathbb P_{d,d}\text{ for every }Q\in\T
  \right\},
\end{equation}
where $\mathbb P_{d,d}$ is the space of polynomials of degree at most
$d$ in each variable, and $C^{d-1,d-1}(\Omega)$ denotes
$C^{d-1}$ continuity in both coordinate directions.
\end{definition}

No boundary conditions are imposed on $S_d(\T)$. Thus,
\eqref{eq:spline-space} defines the full piecewise polynomial space,
not the span of a prescribed family of splines.

\begin{example}\label{ex:spline-space}
For an $n_x\times n_y$ tensor-product mesh with simple interior knots,
\[
  \dim S_d(\T)=(n_x+d)(n_y+d).
\]
In particular, the mesh in Figure~\ref{fig:hierarchical}(a) has
$\dim S_1(\T^0)=16$.
\end{example}

The smoothing cofactor method expresses the smoothness conditions
in terms of edge and vertex cofactors
\cite{LiDeng2016,HuangChen2024}. Consider a horizontal T $l$-edge
$\ell$ with $r$ vertices whose $x$-coordinates are
$s_1<s_2<\cdots<s_r$. Let $\delta_1,\delta_2,\ldots,\delta_r$ be
the corresponding vertex cofactors; see Figure~\ref{fig:cofactor-edge}.
The conformality condition along $\ell$ is
\begin{equation}\label{eq:global-conformality}
  \sum_{i=1}^{r}\delta_i(x-s_i)^d=0.
\end{equation}
Comparing the coefficients of the powers of $x$ gives
\begin{equation}\label{eq:moment}
  \sum_{i=1}^{r}\delta_i s_i^k=0,
  \qquad k=0,1,\ldots,d.
\end{equation}
Equivalently,
\begin{equation}\label{eq:edge-conformality-matrix}
  V_\ell
  \begin{pmatrix}
    \delta_1\\
    \delta_2\\
    \vdots\\
    \delta_r
  \end{pmatrix}
  =\mathbf 0,
  \qquad
  V_\ell=
  \begin{pmatrix}
    1      & 1      & \cdots & 1\\
    s_1    & s_2    & \cdots & s_r\\
    \vdots & \vdots &        & \vdots\\
    s_1^d  & s_2^d  & \cdots & s_r^d
  \end{pmatrix}.
\end{equation}
For a vertical T $l$-edge, the same equations hold with $s_i$ replaced
by the corresponding $y$-coordinates $t_i$. The systems associated
with all T $l$-edges form the \emph{global conformality condition}
of $S_d(\T)$.

\begin{figure}[htbp]
\centering
\begin{tikzpicture}[scale=1.65,line cap=round,line join=round]
  \draw[line width=.8pt] (1,.5)--(1,2.5);
  \draw[line width=.8pt] (1,1.5)--(5,1.5);
  \draw[line width=.8pt] (5,.5)--(5,2.5);
  \draw[line width=.8pt] (1.5,1.5)--(1.5,2);
  \draw[line width=.8pt] (2,1)--(2,1.5);
  \draw[line width=.8pt] (2.5,1)--(2.5,2);
  \draw[line width=.8pt] (3,1)--(3,1.5);
  \draw[line width=.8pt] (4,1.5)--(4,2);
  \draw[line width=.8pt] (4.5,1)--(4.5,1.5);

  \foreach \x in {1,1.5,2,2.5,3,4,4.5,5}
    {\fill (\x,1.5) circle (1.1pt);}

  \node[below left]  at (1,1.5) {$\delta_1$};
  \node[below]       at (1.5,1.5) {$\delta_2$};
  \node[above]       at (2,1.5) {$\delta_3$};
  \node[below right] at (2.5,1.5) {$\delta_4$};
  \node[above]       at (3,1.5) {$\delta_5$};
  \node[below]       at (4,1.5) {$\delta_{r-2}$};
  \node[above]       at (4.5,1.5) {$\delta_{r-1}$};
  \node[below right] at (5,1.5) {$\delta_r$};
  \node[fill=white,inner sep=2pt] at (3.5,1.5) {$\cdots$};
\end{tikzpicture}
\caption{Vertex cofactors along a horizontal T $l$-edge.}
\label{fig:cofactor-edge}
\end{figure}

Since $s_1,\ldots,s_r$ are distinct,
$\operatorname{rank}V_\ell=\min\{d+1,r\}$. Consequently,
\begin{equation}\label{eq:localnullity}
  \dim\ker V_\ell=\max\{0,r-d-1\}.
\end{equation}
In particular, when $r\le d+1$, all vertex cofactors on $\ell$
are zero.

\begin{definition}[Vanishable T $l$-edge]\label{def:vanishable}
A T $l$-edge $\ell$ is called \emph{vanishable} for $S_d$ if its
conformality equations have only the zero solution, or equivalently,
if $n(\ell)\le d+1$. Otherwise, it is called \emph{non-vanishable}.
A hierarchical refinement is called \emph{no-vanishable} if every
T $l$-edge newly generated at each level is non-vanishable immediately
after that subdivision.
\end{definition}

\begin{example}\label{ex:vanishable-levels}
Let $d=5$. Cross subdivision of an isolated $2\times2$ submesh produces
T $l$-edges with five vertices, which are vanishable. For an isolated
$3\times3$ submesh, each new T $l$-edge has seven vertices and is
non-vanishable; see Figure~\ref{fig:vanishable-levels}. This distinction
concerns the conformality equations on individual T $l$-edges and
does not determine the dimension of the full spline space.
\end{example}

\begin{figure}[htbp]
\centering
\begin{tikzpicture}[scale=.63,line cap=round,line join=round]
  \begin{scope}
    \foreach \x in {0,...,4}
      {\draw[gray!55] (\x,0)--(\x,4);}
    \foreach \y in {0,...,4}
      {\draw[gray!55] (0,\y)--(4,\y);}

    \foreach \i in {1,2}{\foreach \j in {1,2}{
      \draw[blue!70!black,line width=.65pt]
        (\i+.5,\j)--(\i+.5,\j+1);
      \draw[blue!70!black,line width=.65pt]
        (\i,\j+.5)--(\i+1,\j+.5);
    }}

    \draw[red!75!black,line width=1.15pt] (1,1.5)--(3,1.5);
    \foreach \x in {1,1.5,2,2.5,3}
      {\fill[red!75!black] (\x,1.5) circle (2pt);}
    \node at (2,-.38) {(a) $2\times2$ submesh};
  \end{scope}

  \begin{scope}[shift={(6,-.5)}]
    \foreach \x in {0,...,5}
      {\draw[gray!55] (\x,0)--(\x,5);}
    \foreach \y in {0,...,5}
      {\draw[gray!55] (0,\y)--(5,\y);}

    \foreach \i in {1,2,3}{\foreach \j in {1,2,3}{
      \draw[blue!70!black,line width=.65pt]
        (\i+.5,\j)--(\i+.5,\j+1);
      \draw[blue!70!black,line width=.65pt]
        (\i,\j+.5)--(\i+1,\j+.5);
    }}

    \draw[red!75!black,line width=1.15pt] (1,1.5)--(4,1.5);
    \foreach \x in {1,1.5,2,2.5,3,3.5,4}
      {\fill[red!75!black] (\x,1.5) circle (2pt);}
    \node at (2.5,-.38) {(b) $3\times3$ submesh};
  \end{scope}
\end{tikzpicture}
\caption{Two refinements for $d=5$.}
\label{fig:vanishable-levels}
\end{figure}

\subsection{Structurally isomorphic T-meshes and dimensional stability}

Let $\mathcal E(\T)$ contain all $l$-edges of $\T$, including boundary
$l$-edges. Its horizontal and vertical subsets are denoted by
$\mathcal E_1(\T)$ and $\mathcal E_2(\T)$, respectively. Write
$B(\T)$, $C(\T)$, $R(\T)$ and $\mathcal E_{\mathrm T}(\T)$ for the
sets of boundary $l$-edges, cross-cuts, rays and T $l$-edges.
For an $l$-edge $\ell$, $\operatorname{cor}(\ell)$ denotes its constant
$y$-coordinate if it is horizontal, and its constant $x$-coordinate
if it is vertical.

\begin{definition}[Structurally isomorphic map;
{\cite[Definition~3.1]{HuangChen2025Classification}}]
\label{def:structural-isomorphism}
Two T-meshes $\T_1$ and $\T_2$ are \emph{structurally isomorphic}
if a bijection
\[
  \varphi:\mathcal E(\T_1)\longrightarrow\mathcal E(\T_2)
\]
exists with the following properties:
\begin{enumerate}[label=\textup{(\arabic*)},leftmargin=*]
  \item
  The four types of $l$-edges are preserved:
  \[
  \begin{aligned}
    \varphi(B(\T_1))&=B(\T_2), &
    \varphi(C(\T_1))&=C(\T_2),\\
    \varphi(R(\T_1))&=R(\T_2), &
    \varphi(\mathcal E_{\mathrm T}(\T_1))
      &=\mathcal E_{\mathrm T}(\T_2).
  \end{aligned}
  \]

  \item
  Intersections are preserved: for all
  $\ell_i,\ell_j\in\mathcal E(\T_1)$,
  \[
    \ell_i\cap\ell_j\ne\varnothing
    \quad\Longleftrightarrow\quad
    \varphi(\ell_i)\cap\varphi(\ell_j)\ne\varnothing.
  \]

  \item
  For $k=1,2$ and $\ell_i,\ell_j\in\mathcal E_k(\T_1)$, the mutual
  positions of parallel $l$-edges satisfy
  \[
    \operatorname{cor}(\ell_i)<\operatorname{cor}(\ell_j)
    \quad\Longrightarrow\quad
    \operatorname{cor}(\varphi(\ell_i))
      <\operatorname{cor}(\varphi(\ell_j)).
  \]
\end{enumerate}
The bijection $\varphi$ is called a \emph{structurally isomorphic map}.
\end{definition}

Write $\T_1\sim_{\mathrm I}\T_2$ when such a map exists. For a
specified collection $U$ of T-meshes, the \emph{structurally isomorphic
class} of $\T\in U$ is
\[
  [\T]_{\mathrm I}
  =\{\widehat\T\in U:\widehat\T\sim_{\mathrm I}\T\}.
\]
For hierarchical T-meshes, we work in the admissible collection $U$
with prescribed levelwise cross subdivisions. The initial grid-line
coordinates may vary, while the selected cell indices and midpoint
rule remain fixed. These variations preserve the order and coincidences
of the supporting grid lines. The corresponding subdivisions are part
of the mesh data used below. Dimensional stability is always understood
relative to this specified collection $U$.

\begin{example}\label{ex:structural-isomorphism}
Consider two tensor-product meshes with grid-line coordinates
\[
\begin{aligned}
  X_{\mathrm s}&=(0,1,2,3),\qquad Y_{\mathrm s}=(0,1,2,3),\\
  X_{\mathrm l}&=(0,2,5,9),\qquad
  Y_{\mathrm l}=(0,1,4,7).
\end{aligned}
\]
Cross-subdivide their central cells
$R_{\mathrm s}=[1,2]^2$ and $R_{\mathrm l}=[2,5]\times[1,4]$,
respectively. The resulting meshes $\T_{\mathrm s}$ and $\T_{\mathrm l}$
are shown in Figure~\ref{fig:hb-isomorphic-meshes}. Mapping each
$l$-edge to its counterpart preserves its type, its intersections
with other $l$-edges and the mutual positions of parallel $l$-edges.
Thus, $\T_{\mathrm s}\sim_{\mathrm I}\T_{\mathrm l}$, although their
knot intervals are different.
\end{example}

\begin{figure}[htbp]
\centering
\begin{tikzpicture}[line cap=round,line join=round]
  \begin{scope}[x=.8cm,y=.8cm]
    \fill[black!7] (1,1) rectangle (2,2);

    \foreach \x in {0,1,2,3}
      {\draw[line width=.6pt] (\x,0)--(\x,3);}
    \foreach \y in {0,1,2,3}
      {\draw[line width=.6pt] (0,\y)--(3,\y);}

    \draw[line width=1.1pt] (1.5,1)--(1.5,2);
    \draw[line width=1.1pt] (1,1.5)--(2,1.5);

    \foreach \x in {0,1,2,3}
      {\node[font=\scriptsize] at (\x,-.28) {$\x$};}
    \foreach \y in {0,1,2,3}
      {\node[font=\scriptsize,anchor=east]
        at (-.16,\y) {$\y$};}

    \node[font=\small] at (1.5,3.48) {$\T_{\mathrm s}$};
    \node[font=\scriptsize,fill=white,inner sep=1pt]
      at (1.5,1.78) {$R_{\mathrm s}$};
  \end{scope}

  \begin{scope}[shift={(5.2,0)},x=.8cm,y=.8cm]
    \fill[black!7] (2,1) rectangle (5,4);

    \foreach \x in {0,2,5,9}
      {\draw[line width=.6pt] (\x,0)--(\x,7);}
    \foreach \y in {0,1,4,7}
      {\draw[line width=.6pt] (0,\y)--(9,\y);}

    \draw[line width=1.1pt] (3.5,1)--(3.5,4);
    \draw[line width=1.1pt] (2,2.5)--(5,2.5);

    \foreach \x in {0,2,5,9}
      {\node[font=\scriptsize] at (\x,-.28) {$\x$};}
    \foreach \y in {0,1,4,7}
      {\node[font=\scriptsize,anchor=east]
        at (-.16,\y) {$\y$};}

    \node[font=\small] at (4.5,7.48) {$\T_{\mathrm l}$};
    \node[font=\scriptsize,fill=white,inner sep=1pt]
      at (3.5,3.2) {$R_{\mathrm l}$};
  \end{scope}
\end{tikzpicture}
\caption{Two structurally isomorphic hierarchical T-meshes.}
\label{fig:hb-isomorphic-meshes}
\end{figure}

\begin{definition}[Dimensional stability;
\cite{HuangChen2025Classification}]\label{def:stability}
A T-mesh $\T\in U$ is \emph{dimensionally stable} for $S_d$ if
\[
  \dim S_d(\T_1)=\dim S_d(\T_2)
  \qquad\text{for all }\T_1,\T_2\in[\T]_{\mathrm I}.
\]
Otherwise, it is \emph{dimensionally unstable}.
\end{definition}

\begin{example}\label{ex:stable-class}
For $d=1$, the meshes in
Example~\ref{ex:structural-isomorphism} have dimension seventeen.
The values at the sixteen level-zero vertices and at the new
cross-vertex can be assigned independently. The values at the four
new T-nodes are determined by linear interpolation along the adjacent
unrefined edges. These values uniquely determine a continuous
bilinear spline on every cell. Hence,
\[
  \dim S_1(\widehat\T)=17
  \qquad\text{for every }\widehat\T\in[\T_{\mathrm s}]_{\mathrm I}.
\]
The examples in Sections~\ref{sec:vanishable-examples}
and~\ref{sec:examples} show that this invariance does not hold for
all hierarchical T-meshes when $d=3,4,5,6$.
\end{example}

\section{Construction-suitable T-meshes}\label{sec:construction-suitable}

In this section, we introduce construction-suitable T-meshes and
establish dimensional stability as a necessary condition for basis
construction. Following the viewpoint of
\cite{Huang2025BasisConstruction}, we distinguish the construction
of a linearly independent spline family from that of a basis on
an entire structurally isomorphic class.

Throughout this section, the degree $d$ and the admissible collection
$U$ are fixed. Each construction rule specifies all choices needed
to determine its functions. For hierarchical T-meshes, corresponding
mesh data include the prescribed levelwise subdivisions.

\begin{definition}[Mesh-induced spline construction]\label{def:alpha}
A rule $\alpha$ is called a \emph{mesh-induced spline construction}
on $U$ if it assigns to each $\T\in U$ a finite family
\begin{equation}\label{eq:alpha-mesh-map}
  \alpha(\T)
  =\{f^{\T}_{\lambda}:\lambda\in\Lambda_{\T}\}
  \subset S_d(\T),
\end{equation}
whose functions are determined by the mesh data and are linearly
independent.
\end{definition}

The family $\alpha(\T)$ need not span $S_d(\T)$. Moreover, a
construction defined on individual T-meshes need not give
corresponding families on structurally isomorphic T-meshes.

Two constructed families are said to \emph{correspond} under a
structurally isomorphic map if their index sets are in one-to-one
correspondence and the corresponding functions are defined by the
same formulas using corresponding mesh data. In particular, their
supporting cells, refinement levels and local knot indices
correspond. The knot values are taken from the respective meshes;
the resulting functions need not have the same coefficients or
physical supports.

\begin{definition}[Well-defined basis construction]\label{def:beta}
Let $\alpha$ be a mesh-induced spline construction on $U$. Its
restriction to $[\T]_{\mathrm I}$ is called a \emph{well-defined
basis construction}, denoted by $\beta$, if the following
conditions hold:
\begin{enumerate}[label=\textup{(\roman*)}]
  \item
  For every $\widehat\T\in[\T]_{\mathrm I}$,
  \[
    \operatorname{span}\alpha(\widehat\T)
    =S_d(\widehat\T).
  \]

  \item
  For any $\T_1,\T_2\in[\T]_{\mathrm I}$, the families
  $\alpha(\T_1)$ and $\alpha(\T_2)$ correspond under every
  structurally isomorphic map compatible with the prescribed
  mesh data,
  \[
    \varphi:\mathcal E(\T_1)\longrightarrow\mathcal E(\T_2).
  \]
\end{enumerate}
In this case,
$\beta(\widehat\T)=\alpha(\widehat\T)$ for every
$\widehat\T\in[\T]_{\mathrm I}$.
\end{definition}

Since the functions constructed by $\alpha$ are linearly independent,
condition~(i) ensures that they form a basis. Condition~(ii) requires
the construction to be independent of the chosen mesh within the
class. We use $\beta([\T]_{\mathrm I})$ to denote this common
construction, rather than a single family of functions on a fixed
domain.

\begin{definition}[Construction-suitable T-mesh]
\label{def:construction-suitable}
A T-mesh $\T\in U$ is called \emph{construction-suitable for $S_d$}
if a well-defined basis construction $\beta$ exists on
$[\T]_{\mathrm I}$. When the construction is specified, we say that
$\T$ is construction-suitable \emph{with respect to $\beta$}.
\end{definition}

The following proposition relates construction-suitability to the
stability of the dimension.

\begin{proposition}\label{prop:stability-basis-map}
If $\T$ is construction-suitable for $S_d$, then the dimension of
$S_d$ is stable on $[\T]_{\mathrm I}$.
\end{proposition}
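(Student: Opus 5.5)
The plan is to read the conclusion directly off the definitions. By \Cref{def:construction-suitable}, a construction-suitable $\T$ admits a mesh-induced spline construction $\alpha$ on $U$ whose restriction to $[\T]_{\mathrm I}$ is a well-defined basis construction $\beta$ in the sense of \Cref{def:beta}. We fix arbitrary $\T_1,\T_2\in[\T]_{\mathrm I}$ and aim to show $\dim S_d(\T_1)=\dim S_d(\T_2)$, which is exactly the requirement of \Cref{def:stability}.

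The first step computes each dimension from the constructed family. By \Cref{def:alpha}, the family $\alpha(\T_i)=\{f^{\T_i}_\lambda:\lambda\in\Lambda_{\T_i}\}$ is finite and linearly independent. By condition~(i) of \Cref{def:beta}, it spans $S_d(\T_i)$. Hence it is a basis, and
\[
  \dim S_d(\T_i)=|\Lambda_{\T_i}|,\qquad i=1,2.
\]

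The second step compares the two index sets. Since $\T_1\sim_{\mathrm I}\T_2$, there is a structurally isomorphic map $\varphi:\mathcal E(\T_1)\to\mathcal E(\T_2)$. Condition~(ii) of \Cref{def:beta} says that $\alpha(\T_1)$ and $\alpha(\T_2)$ correspond under $\varphi$. By the definition of correspondence preceding \Cref{def:beta}, their index sets are then in one-to-one correspondence, so $|\Lambda_{\T_1}|=|\Lambda_{\T_2}|$. Combining this with the first step gives the desired equality of dimensions.

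The only point that needs care is that condition~(ii) is stated for maps compatible with the prescribed mesh data. I must make sure that at least one such $\varphi$ exists between any two members of the class. For the admissible collection $U$ of hierarchical T-meshes, this follows from the setup: members of $[\T]_{\mathrm I}$ differ only in initial grid-line coordinates, while the selected cell indices and midpoint rule stay fixed. The natural map sending each $l$-edge to its counterpart is therefore structurally isomorphic and compatible with the mesh data. Once this is noted, the argument is a direct cardinality comparison, with no computation involved.
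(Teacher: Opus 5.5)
Your argument is correct and matches the paper's proof: each family is a basis (linear independence from Definition~\ref{def:alpha}, spanning from condition~(i)), and condition~(ii) puts the index sets in bijection, so $\dim S_d(\T_1)=|\beta(\T_1)|=|\beta(\T_2)|=\dim S_d(\T_2)$. You also check that at least one structurally isomorphic map compatible with the prescribed mesh data exists, so that condition~(ii) is not vacuous. The paper leaves this point implicit, and you settle it correctly from the setup of the admissible collection $U$.
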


\begin{proof}
Let $\beta$ be a well-defined basis construction on
$[\T]_{\mathrm I}$, and let $\T_1,\T_2\in[\T]_{\mathrm I}$.
By Definition~\ref{def:beta}, the two constructed families have
the same number of functions. Since both families are bases,
\[
  \dim S_d(\T_1)
  =|\beta(\T_1)|
  =|\beta(\T_2)|
  =\dim S_d(\T_2).
\]
Thus, the dimension is constant on $[\T]_{\mathrm I}$.
\end{proof}

Proposition~\ref{prop:stability-basis-map} gives a necessary condition
for construction-suitability. For a proposed construction, linear
independence and completeness must still be established. When the
dimension is unstable, a basis can be constructed separately on
each T-mesh, but these bases cannot arise from a common construction
satisfying Definition~\ref{def:beta}.

We next give an example to illustrate the correspondence between
constructed functions on two structurally isomorphic T-meshes.

\begin{example}[A bilinear HB construction]\label{ex:hb-alpha}
Consider the meshes $\T_{\mathrm s}$ and $\T_{\mathrm l}$ in
Example~\ref{ex:structural-isomorphism}, and let $d=1$. For
$\nu\in\{\mathrm s,\mathrm l\}$, denote by
$B^{0,\nu}_{ij}$, $0\le i,j\le3$, the open-clamped bilinear
tensor-product B-splines on the initial mesh. The level-one
B-splines are defined on the tensor-product mesh obtained by
cross-subdividing every cell of the initial mesh.

The one-level HB construction retains the coarse B-splines whose
supports are not contained in the refined region $R_\nu$ and
selects the fine B-splines whose supports are contained in $R_\nu$
\cite{ForseyBartels1988}. In this example, no coarse support is
contained in $R_\nu$, whereas exactly one level-one B-spline has
support contained in $R_\nu$. Denote this function by
$B^{1,\nu}_{\star}$. The resulting family is
\begin{equation}\label{eq:hb-example-family}
  \alpha_{\mathrm{HB}}(\T_\nu)
  =\{B^{0,\nu}_{ij}:0\le i,j\le3\}
   \cup\{B^{1,\nu}_{\star}\}.
\end{equation}

All seventeen functions belong to $S_1(\T_\nu)$. Indeed, the coarse
B-splines remain continuous and bilinear on each cell, while
$B^{1,\nu}_{\star}$ is continuous, piecewise bilinear on $R_\nu$,
and zero on $\partial R_\nu$ and outside $R_\nu$. To prove linear
independence, consider a linear relation among these functions.
Evaluation at the sixteen initial grid vertices gives zero
coefficients for all coarse B-splines, since they are nodal at
these vertices and the fine B-spline vanishes there. Evaluation
at the center of $R_\nu$ then gives a zero coefficient for
$B^{1,\nu}_{\star}$.

Let $I_i^\nu$ and $J_j^\nu$ denote the univariate support intervals
of the coarse B-splines. Then
\begin{equation}\label{eq:hb-example-supports}
  \operatorname{supp}B^{0,\nu}_{ij}
  =I_i^\nu\times J_j^\nu,
  \qquad
  \operatorname{supp}B^{1,\nu}_{\star}=R_\nu.
\end{equation}
For the two meshes, these intervals are
\begin{align*}
  (I^{\mathrm s}_0,I^{\mathrm s}_1,
   I^{\mathrm s}_2,I^{\mathrm s}_3)
  &=([0,1],[0,2],[1,3],[2,3]),\\
  (I^{\mathrm l}_0,I^{\mathrm l}_1,I^{\mathrm l}_2,I^{\mathrm l}_3)
  &=([0,2],[0,5],[2,9],[5,9]),\\
  (J^{\mathrm l}_0,J^{\mathrm l}_1,J^{\mathrm l}_2,J^{\mathrm l}_3)
  &=([0,1],[0,4],[1,7],[4,7]),
\end{align*}
with $J_j^{\mathrm s}=I_j^{\mathrm s}$ for $0\le j\le3$.
The supports are shown in Figure~\ref{fig:hb-support-atlas}.
In each panel, the upper array contains the sixteen level-zero
supports, and the lower diagram shows the selected level-one
support.

The construction and the independence argument apply to every
mesh in $[\T_{\mathrm s}]_{\mathrm I}$. By
Example~\ref{ex:stable-class}, the full spline space has dimension
seventeen throughout this class. Hence the constructed family is
a basis on each mesh. Moreover, the indices, levels and supporting
cells correspond under structurally isomorphic maps, and the same
B-spline formulas use the corresponding knots. Therefore,
$\alpha_{\mathrm{HB}}$ defines a well-defined basis construction
$\beta_{\mathrm{HB}}$, and the meshes in this class are
construction-suitable for $S_1$.
\end{example}

\begin{figure}[htbp]
\centering
\begin{tikzpicture}[line cap=round,line join=round]
  \node[font=\small] at (2.2,4.75) {(a) $\T_{\mathrm s}$};

  \foreach \i/\xa/\xb in {0/0/1,1/0/2,2/1/3,3/2/3}{
    \foreach \j/\ya/\yb in {0/0/1,1/0/2,2/1/3,3/2/3}{
      \begin{scope}[
        shift={({1.25*\i},{1.25*(3-\j)})},
        x=.24cm,y=.24cm
      ]
        \fill[black!18] (\xa,\ya) rectangle (\xb,\yb);

        \foreach \x in {0,1,2,3}{
          \draw[black!45,line width=.25pt]
            (\x,0)--(\x,3);
        }
        \foreach \y in {0,1,2,3}{
          \draw[black!45,line width=.25pt]
            (0,\y)--(3,\y);
        }

        \draw[line width=.4pt] (0,0) rectangle (3,3);
        \node[font=\scriptsize] at (1.5,-.8)
          {$B^{0,\mathrm s}_{\i\j}$};
      \end{scope}
    }
  }

  \begin{scope}[shift={(1.875,-1.35)},x=.24cm,y=.24cm]
    \fill[black!32] (1,1) rectangle (2,2);

    \foreach \x in {0,1,2,3}{
      \draw[black!45,line width=.25pt] (\x,0)--(\x,3);
    }
    \foreach \y in {0,1,2,3}{
      \draw[black!45,line width=.25pt] (0,\y)--(3,\y);
    }

    \draw[line width=.4pt] (0,0) rectangle (3,3);
    \draw[line width=.55pt] (1.5,1)--(1.5,2);
    \draw[line width=.55pt] (1,1.5)--(2,1.5);

    \node[font=\scriptsize] at (1.5,-.8)
      {$B^{1,\mathrm s}_{\star}$};
  \end{scope}

  \begin{scope}[shift={(8,0)}]
    \node[font=\small] at (2.2,4.75) {(b) $\T_{\mathrm l}$};

    \foreach \i/\xa/\xb in {0/0/2,1/0/5,2/2/9,3/5/9}{
      \foreach \j/\ya/\yb in {0/0/1,1/0/4,2/1/7,3/4/7}{
        \begin{scope}[
          shift={({1.25*\i},{1.25*(3-\j)})},
          x=.085cm,y=.085cm
        ]
          \fill[black!18] (\xa,\ya) rectangle (\xb,\yb);

          \foreach \x in {0,2,5,9}{
            \draw[black!45,line width=.25pt]
              (\x,0)--(\x,7);
          }
          \foreach \y in {0,1,4,7}{
            \draw[black!45,line width=.25pt]
              (0,\y)--(9,\y);
          }

          \draw[line width=.4pt] (0,0) rectangle (9,7);
          \node[font=\scriptsize] at (4.5,-2.25)
            {$B^{0,\mathrm l}_{\i\j}$};
        \end{scope}
      }
    }

    \begin{scope}[shift={(1.875,-1.35)},x=.085cm,y=.085cm]
      \fill[black!32] (2,1) rectangle (5,4);

      \foreach \x in {0,2,5,9}{
        \draw[black!45,line width=.25pt] (\x,0)--(\x,7);
      }
      \foreach \y in {0,1,4,7}{
        \draw[black!45,line width=.25pt] (0,\y)--(9,\y);
      }

      \draw[line width=.4pt] (0,0) rectangle (9,7);
      \draw[line width=.55pt] (3.5,1)--(3.5,4);
      \draw[line width=.55pt] (2,2.5)--(5,2.5);

      \node[font=\scriptsize] at (4.5,-2.25)
        {$B^{1,\mathrm l}_{\star}$};
    \end{scope}
  \end{scope}
\end{tikzpicture}
\caption{Supports of the bilinear HB-splines.}
\label{fig:hb-support-atlas}
\end{figure}

In Section~\ref{sec:instability-assumption}, we examine whether
hierarchical refinement ensures dimensional stability, first allowing
vanishable T $l$-edges and then excluding them at every level.

\section{Instability examples and a mesh assumption}
\label{sec:instability-assumption}

By Proposition~\ref{prop:stability-basis-map}, dimensional stability is
necessary for the proposed basis construction. In this section, we
examine two refinement conditions for hierarchical T-meshes.
Section~\ref{sec:vanishable-examples} considers unrestricted
hierarchical refinement, whereas Section~\ref{sec:examples} excludes
vanishable T $l$-edges at every level. Four examples show that neither
condition ensures the stability of the dimension. We then describe
refined regions by template translations and introduce a mesh
assumption for the subsequent $\PHtwoT$-spline
construction. The conformality matrix reductions are described in
Appendix~\ref{app:conformality}. The four dimension calculations
are given in Appendices~\ref{app:d3}--\ref{app:d6}, and the proofs
of the template conditions are collected in
Appendix~\ref{app:templates}.

To specify the subdivisions, we index the cells from zero in each
coordinate direction. At level $k$, the indices refer to the
tensor-product grid obtained by $k$ uniform cross subdivisions of
$\T^0$. Subdividing a level-$k$ cell with index $(i,j)$ gives four
cells with indices $(2i+\varepsilon,2j+\eta)$, where
$\varepsilon,\eta\in\{0,1\}$. We use $\cR^{(d)}_k$ to denote the
indices of the level-$(k-1)$ cells selected for subdivision in the
example of degree $d$. Their geometric coordinates are determined
by the initial grid and the midpoint subdivision rule.

\subsection{Unrestricted hierarchical refinement}
\label{sec:vanishable-examples}

Here, \emph{unrestricted} means that no additional condition is
imposed on the cells selected for hierarchical cross subdivision.
In particular, vanishable T $l$-edges may be generated at any level.
By Definition~\ref{def:vanishable}, the conformality equations on
such an edge force all its vertex cofactors to zero. The edge thus
provides no free cofactor, and a refinement may insert new edges without
increasing the dimension of the full spline space. The following examples illustrate
this situation and show that the dimension may depend on the
geometric information of the hierarchical T-mesh.

\subsubsection{A bicubic example}\label{sec:d3}

Let $\tau>0$, and consider the initial tensor-product mesh with
grid-line coordinates
\begin{equation}\label{eq:d3knots}
  X(\tau)=(0,1,1+\tau,2+\tau,3+\tau,4+\tau),
  \qquad Y=(0,1,2,3,4,5).
\end{equation}
Starting from this $5\times5$ cell mesh, perform three levels of
cross subdivision with
\begin{align}
  \cR^{(3)}_1
  & =\{(1,3),(2,2),(2,3),(3,1),(3,2)\},
    \label{eq:d3R1}\\
  \cR^{(3)}_2
  & =\{(5,4),(5,5),(5,6),(6,4)\},
    \label{eq:d3R2}\\
  \cR^{(3)}_3
  & =\{(10,8),(10,9),(10,10),(11,9),\nonumber\\
  & \hspace{12mm}(11,10),(11,11),(11,12),(11,13)\}.
    \label{eq:d3R3}
\end{align}
The resulting meshes are denoted by $\T_3^k(\tau)$, $0\le k\le3$,
with $\T_3(\tau)=\T_3^3(\tau)$. All cells selected at the second
and third levels have been generated at the preceding level.
The numbers of cells are
\[
  25\longrightarrow40\longrightarrow52\longrightarrow76.
\]

Figure~\ref{fig:d3hierarchy} shows the three refinement levels for
$\tau=1$ and the final mesh for $\tau=2$. The shaded cells indicate
the region subdivided at the displayed level. Varying $\tau$ changes
only the second initial knot interval in the $x$-direction; the
subdivision indices and the midpoint rule remain unchanged.
Consequently, the meshes $\T_3(\tau)$ are structurally isomorphic
for $\tau>0$. At each refinement level, at least one new T $l$-edge
has only three vertices and is therefore vanishable for $S_3$.

\begin{figure}[htbp]
\centering
\def\BicubicRone{1/3,2/2,2/3,3/1,3/2}
\def\BicubicRtwo{5/4,5/5,5/6,6/4}
\def\BicubicRthree{10/8,10/9,10/10,11/9,11/10,11/11,11/12,11/13}
\newcommand{\DrawBicubicStep}[2]{  \begin{tikzpicture}[
    x=.86cm,y=.86cm,line cap=round,line join=round,
    declare function={bix(\z)=\z+(#2-1)*min(1,max(0,\z-1));}
  ]
    \ifnum#1=1
      \foreach \i/\j in \BicubicRone{
        \fill[black!10] ({bix(\i)},\j)
          rectangle ({bix(\i+1)},\j+1);
      }
    \fi
    \ifnum#1=2
      \foreach \i/\j in \BicubicRtwo{
        \fill[black!10] ({bix(\i/2)},\j/2)
          rectangle ({bix((\i+1)/2)},{(\j+1)/2});
      }
    \fi
    \ifnum#1=3
      \foreach \i/\j in \BicubicRthree{
        \fill[black!10] ({bix(\i/4)},\j/4)
          rectangle ({bix((\i+1)/4)},{(\j+1)/4});
      }
    \fi
    \foreach \i in {0,...,5}{
      \draw[black!50,line width=.35pt]
        ({bix(\i)},0)--({bix(\i)},5);
      \draw[black!50,line width=.35pt]
        (0,\i)--({bix(5)},\i);
      \pgfmathsetmacro{\tickx}{bix(\i)}
      \node[font=\scriptsize,below] at (\tickx,0)
        {$\pgfmathprintnumber{\tickx}$};
      \node[font=\scriptsize,left] at (0,\i) {$\i$};
    }
    \foreach \i/\j in \BicubicRone{
      \draw[line width=.45pt]
        ({bix(\i+.5)},\j)--({bix(\i+.5)},\j+1);
      \draw[line width=.45pt]
        ({bix(\i)},\j+.5)--({bix(\i+1)},\j+.5);
    }
    \ifnum#1>1
      \foreach \i/\j in \BicubicRtwo{
        \draw[line width=.5pt]
          ({bix((\i+.5)/2)},\j/2)
          --({bix((\i+.5)/2)},{(\j+1)/2});
        \draw[line width=.5pt]
          ({bix(\i/2)},{(\j+.5)/2})
          --({bix((\i+1)/2)},{(\j+.5)/2});
      }
    \fi
    \ifnum#1>2
      \foreach \i/\j in \BicubicRthree{
        \draw[line width=.55pt]
          ({bix((\i+.5)/4)},\j/4)
          --({bix((\i+.5)/4)},{(\j+1)/4});
        \draw[line width=.55pt]
          ({bix(\i/4)},{(\j+.5)/4})
          --({bix((\i+1)/4)},{(\j+.5)/4});
      }
    \fi
    \draw[line width=.7pt] (0,0) rectangle ({bix(5)},5);
  \end{tikzpicture}}
\begin{minipage}[t]{.48\linewidth}
  \centering
  \DrawBicubicStep{1}{1}
  \par\smallskip{\small (a) $\T_3^1(1)$}
\end{minipage}\hfill
\begin{minipage}[t]{.48\linewidth}
  \centering
  \DrawBicubicStep{2}{1}
  \par\smallskip{\small (b) $\T_3^2(1)$}
\end{minipage}
\par\medskip
\begin{minipage}[t]{.48\linewidth}
  \centering
  \DrawBicubicStep{3}{1}
  \par\smallskip{\small (c) $\T_3(1)$}
\end{minipage}\hfill
\begin{minipage}[t]{.48\linewidth}
  \centering
  \DrawBicubicStep{3}{2}
  \par\smallskip{\small (d) $\T_3(2)$}
\end{minipage}
\caption{A three-level bicubic example.}
\label{fig:d3hierarchy}
\end{figure}

\begin{theorem}\label{thm:d3}
For the hierarchical T-mesh $\T_3(\tau)$ defined above,
\begin{equation}\label{eq:d3dimension}
  \dim S_3(\T_3(\tau))=
  \begin{cases}
    66, & \tau=1,\\
    65, & \tau>0,\ \tau\ne1.
  \end{cases}
\end{equation}
Thus, the dimension of $S_3$ is not stable on
$[\T_3(1)]_{\mathrm I}$.
\end{theorem}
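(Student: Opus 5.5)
We will compute the dimension with the smoothing cofactor method. For a regular T-mesh, the standard formula writes $\dim S_d(\T)$ as a topological term plus the nullity of the global conformality system:
\[
  \dim S_d(\T)=(d+1)^2+\sum_{\ell\in C(\T)\cup R(\T)}(\text{contribution of }\ell)+\dim W(\T).
\]
Here the cross-cuts and rays contribute purely topological terms, such as $(d+1)$ per cross-cut and $(n(\ell)-1)\cdots$ type counts. The space $W(\T)$ is the space of vertex cofactors on $L(\T)$ satisfying \eqref{eq:edge-conformality-matrix} on every T $l$-edge simultaneously. The first term is identical for all $\tau>0$, because the meshes $\T_3(\tau)$ are structurally isomorphic. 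The whole task therefore reduces to showing
\[
  \dim W(\T_3(\tau))=
  \begin{cases}
    c+1, & \tau=1,\\
    c, & \tau\ne 1,
  \end{cases}
\]
for an explicit $c$, with the base count fixed so that the totals come out as $66$ and $65$. We will compute the topological term by counting the $l$-edges of the $76$-cell mesh level by level, as a cross-check.

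\textbf{Reduction.} First we eliminate all vanishable T $l$-edges. By \eqref{eq:localnullity}, every T $l$-edge with $n(\ell)\le4$ forces its vertex cofactors to vanish. Setting these cofactors to zero removes vertices from the T $l$-edges that cross them, which may lower their vertex count to at most $4$. We iterate this diagonalization in the spirit of \cite{LiDeng2016,HuangChen2024} until the remaining component is completely non-diagonalizable. By \cite{HuangChen2024}, the instability is then governed only by the rank of a reduced matrix $M(\tau)$ whose unknowns are the cofactors at the surviving multi-vertices. We expect the survivors to be a small cluster of T $l$-edges near the third-level refinement, on the long vertical strip of column $11$, together with the horizontal edges through it.

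\textbf{Rank computation.} For each surviving T $l$-edge, the $x$- or $y$-coordinates of its vertices are explicit affine functions of $\tau$. Horizontal edges crossing the column $x\in[1,1+\tau]$ have abscissae such as $1+\tau/2$ and $1+3\tau/4$; vertical edges have dyadic ordinates independent of $\tau$. We eliminate the mono-vertex cofactors on each edge by solving the $4\times r$ Vandermonde system \eqref{eq:edge-conformality-matrix}. This expresses the mono-vertex cofactors through the multi-vertex ones, which leaves $M(\tau)$ with polynomial (or rational) entries in $\tau$. We then compute $\rank M(\tau)$ generically and find a minor that is a nonzero polynomial in $\tau$; the generic rank gives the value $65$. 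Finally we show that the maximal minors all share the factor $(\tau-1)$, for example because their determinant is proportional to $(\tau-1)$ times a factor positive for $\tau>0$. This produces the rank drop by exactly one at $\tau=1$. At $\tau=1$ the drop has a geometric explanation: the mesh becomes uniform in $x$, so a symmetric divided-difference relation among the surviving edges holds, and we can exhibit the extra null vector explicitly.

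\textbf{Main obstacle.} The hard part is the bookkeeping of the diagonalization: tracking which of the $76$ cells' vertices lie on which T $l$-edges after three levels, and confirming that the reduced matrix is small and that no other value of $\tau>0$ is a root. The plan is to write the reduced system explicitly in the appendix-style ordering of Appendix~\ref{app:conformality} and factor its determinant. We will also check positivity of the remaining factor on $\tau>0$, for instance by showing that its coefficients all have one sign after substituting $\tau>0$. The instability statement then follows immediately from Definition~\ref{def:stability}.
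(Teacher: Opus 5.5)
Your plan follows the same route as the paper: the cofactor dimension formula, iterated forced-zero elimination, elimination of mono-vertices, and a $\tau$-dependent rank computation for the reduced matrix. But it does not yet prove the theorem, because every number in the statement is either deferred or read off from the answer.

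\textbf{The base count is circular.} You leave the topological term as ``$(d+1)$ per cross-cut and $(n(\ell)-1)\cdots$ type counts'' and then fix the base ``so that the totals come out as $66$ and $65$''. The formula is \eqref{eq:dimformula}, namely $(d+1)^2+c(d+1)+n_v-\rank M$. Besides the cross-cut term, it contains the $n_v-n_{\mathrm L}$ interior vertices off $L(\T)$. Here $c=8$, $n_v=86$ and $n_{\mathrm L}=70$; the sixteen off-component vertices are the level-zero crossings. So the base is $16+32+16=64$, and what must be shown is $\dim\ker M_3(\tau)=2$ for $\tau=1$ and $=1$ otherwise.

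\textbf{The reduction is never performed.} You would need to identify the eight edges removed by iterated forced-zero elimination. At $\tau=1$ these lie on $y=\tfrac32,\tfrac{17}8,\tfrac{23}8,\tfrac{25}8,\tfrac{13}4,\tfrac{27}8$ and $x=\tfrac32,\tfrac{13}4$, and they kill $25$ cofactors. The eleven surviving edges carry $20$ mono- and $25$ multi-vertices. This gives a $24\times25$ matrix $K_3(\tau)$ with $\dim S_3(\T_3(\tau))=64+\dim\ker K_3(\tau)$. Your assertions that ``the generic rank gives the value $65$'' and that the rank drops by exactly one at $\tau=1$ are the content of the theorem, not steps toward it.

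\textbf{The closing rank argument lacks a lower bound.} Vanishing of all maximal minors at $\tau=1$, or one extra null vector, shows only a drop of \emph{at least} one. Exactness needs either a nonzero minor of order $23$ at $\tau=1$, or an argument that a maximal minor with a simple zero at $\tau=1$ limits the drop to one (for instance via the Smith form over the local ring at $\tau=1$). Your determinant route is viable: the paper records a maximal minor equal to $-2^{65}3^{10}(\tau-1)$.

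\textbf{The paper avoids determinant factoring.} In the coordinates $\xi=8x-8(\tau-1)$, $\eta=8y$, all surviving multi-vertices and all but one mono-vertex are independent of $\tau$. Hence only the single row from $H_{28}$ depends on $\tau$, through the mono-coordinate $16-8\tau$ of its vertex at $x=1$. Deleting that row leaves a constant matrix $C_3$ of rank $23$, certified by an explicit nonzero minor. Its kernel is spanned by two explicit vectors $\boldsymbol a,\boldsymbol b$ with $K_3(\tau)\boldsymbol a=0$ and $K_3(\tau)\boldsymbol b=128(\tau-1)e_{13}$. This gives nullity $2$ at $\tau=1$ and $1$ otherwise, with no positivity check on $\tau>0$.

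Note also that $\tau$ does not enter through surviving abscissae such as $1+\tau/2$. The vertical edge at $x=1+\tau/2$ has three vertices and is removed in the forced-zero stage.
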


The proof of Theorem~\ref{thm:d3}, including the reduced matrix
and its exact rank calculation, is given in Appendix~\ref{app:d3}.
The dimensions at successive levels for $\tau=1,2$ are listed in
Table~\ref{tab:d3stages}. The first two refinements add edges without
increasing the dimension. At the third level, the dimension
increases by two when $\tau=1$ and by one when $\tau=2$.

\begin{table}[htbp]
\centering
\caption{Dimensions at the levels of the bicubic example.}
\label{tab:d3stages}
\begin{tabular}{@{}cccc@{}}
\toprule
Level $k$ & Number of cells
  & $\dim S_3(\T_3^k(1))$ & $\dim S_3(\T_3^k(2))$\\
\midrule
0 & 25 & 64 & 64\\
1 & 40 & 64 & 64\\
2 & 52 & 64 & 64\\
3 & 76 & 66 & 65\\
\bottomrule
\end{tabular}
\end{table}

\subsubsection{A biquartic example}\label{sec:d4}

We next consider the hierarchical T-mesh in
\cite[Example~4.1 and Figure~10]{HuangChen2026}.
For $0<\tau<2$, take the initial grid-line coordinates
\begin{equation}\label{eq:d4knots}
  X(\tau)=(0,\tau,2,3,4,5,6),
  \qquad Y=(0,1,2,3,4,5,6).
\end{equation}
Cross-subdivide the eight cells with indices
\begin{equation}\label{eq:d4mask}
  \begin{split}
    \cR^{(4)}_1=\{&(2,4),(1,3),(2,3),(3,3),\\
                 &(2,2),(3,2),(4,2),(3,1)\}.
  \end{split}
\end{equation}
The resulting mesh, denoted by $\T_4(\tau)$, has sixty cells.
The same subdivision is used for all $\tau$, so these meshes are
structurally isomorphic.

Figure~\ref{fig:d4mesh} shows $\T_4(1)$ and $\T_4(3/2)$.
The four dashed T $l$-edges each contain three vertices and are
vanishable for $S_4$. In panel~(a), these are
$\overline{v_2v_3}$, $\overline{v_4v_8}$,
$\overline{v_9v_{13}}$ and $\overline{v_{14}v_{15}}$.

\begin{figure}[htbp]
\centering
\newcommand{\DrawBiquarticMesh}[2]{  \begin{tikzpicture}[x=.9cm,y=.9cm,line cap=round,line join=round]
    \pgfmathsetmacro{\midleft}{(#1+2)/2}
    \foreach \x in {0,#1,2,3,4,5,6}{
      \draw[black!50,line width=.4pt] (\x,0)--(\x,6);
      \node[font=\scriptsize,below] at (\x,0)
        {$\pgfmathprintnumber{\x}$};
    }
    \foreach \y in {0,...,6}{
      \draw[black!50,line width=.4pt] (0,\y)--(6,\y);
      \node[font=\scriptsize,left] at (0,\y) {$\y$};
    }
    \draw[line width=.65pt] (2,2.5)--(5,2.5);
    \draw[line width=.65pt] (#1,3.5)--(4,3.5);
    \draw[line width=.65pt] (2.5,2)--(2.5,5);
    \draw[line width=.65pt] (3.5,1)--(3.5,4);
    \draw[line width=.65pt,densely dashed] (3,1.5)--(4,1.5);
    \draw[line width=.65pt,densely dashed] (2,4.5)--(3,4.5);
    \draw[line width=.65pt,densely dashed]
      (\midleft,3)--(\midleft,4);
    \draw[line width=.65pt,densely dashed] (4.5,2)--(4.5,3);
    \draw[line width=.7pt] (0,0) rectangle (6,6);
    \ifnum#2=1
      \foreach \i/\x/\y in {
        1/2.5/5,2/2/4.5,3/3/4.5,4/1.5/4,
        5/3.5/4,6/1/3.5,7/4/3.5,8/1.5/3,
        9/4.5/3,10/2/2.5,11/5/2.5,12/2.5/2,
        13/4.5/2,14/3/1.5,15/4/1.5,16/3.5/1
      }{
        \fill (\x,\y) circle (1.4pt);
        \node[font=\scriptsize,above right,inner sep=1.5pt]
          at (\x,\y) {$v_{\i}$};
      }
    \fi
  \end{tikzpicture}}
\begin{minipage}[t]{.48\linewidth}
  \centering
  \DrawBiquarticMesh{1}{1}
  \par\smallskip{\small (a) $\T_4(1)$}
\end{minipage}\hfill
\begin{minipage}[t]{.48\linewidth}
  \centering
  \DrawBiquarticMesh{1.5}{0}
  \par\smallskip{\small (b) $\T_4(3/2)$}
\end{minipage}
\caption{A one-level biquartic example.}
\label{fig:d4mesh}
\end{figure}

\begin{theorem}\label{thm:d4}
For the hierarchical T-mesh $\T_4(\tau)$ defined above,
\begin{equation}\label{eq:d4dimension}
  \dim S_4(\T_4(\tau))=
  \begin{cases}
    101, & \tau=1,\\
    100, & 0<\tau<2,\ \tau\ne1.
  \end{cases}
\end{equation}
Thus, the dimension of $S_4$ is not stable on
$[\T_4(1)]_{\mathrm I}$.
\end{theorem}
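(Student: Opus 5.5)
We will use the smoothing cofactor method. By the standard dimension formula for spline spaces over a T-mesh whose only non-tensor-product features are T $l$-edges,
\[
  \dim S_d(\T)=\dim S_d(\T_{\mathrm{cc}})+\dim W(\T),
\]
where $\T_{\mathrm{cc}}$ consists of the boundary $l$-edges, cross-cuts and rays, and $W(\T)$ is the solution space of the global conformality condition on $L(\T)$. Each ray and cross-cut carries a contribution counted as in the tensor-product case. Concretely, the tensor-product-with-rays part gives a known topological count. For $\T_4(\tau)$ this count is $(6+4)(6+4)=100$ from the initial $6\times6$ grid, because the refinement produces no cross-cuts or rays, only T $l$-edges. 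So $\dim S_4(\T_4(\tau))=100+\dim W$, and it suffices to show $\dim W=1$ for $\tau=1$ and $\dim W=0$ otherwise.

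First we eliminate the vanishable edges. The four dashed T $l$-edges have three vertices each, so by \eqref{eq:localnullity} all their vertex cofactors vanish. Their endpoints lie on the solid T $l$-edges, at $v_2,v_3,v_4,v_8,v_9,v_{13},v_{14},v_{15}$. These points are multi-vertices shared with the four non-dashed T $l$-edges:
\begin{itemize}
  \item $y=2.5$ for $x\in[2,5]$;
  \item $y=3.5$ for $x\in[\tau,4]$;
  \item $x=2.5$ for $y\in[2,5]$;
  \item $x=3.5$ for $y\in[1,4]$.
\end{itemize}
Each vertex cofactor vanishing on a dashed edge therefore also vanishes on the corresponding solid edge.

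After this elimination, the remaining unknowns are the cofactors on the four solid edges. The only free multi-vertices are the four crossing points $(2.5,2.5)$, $(2.5,3.5)$, $(3.5,2.5)$ and $(3.5,3.5)$. All other nonzero-possible unknowns are mono-vertices. We count vertices on each solid edge with the dashed-endpoint cofactors removed. The edge $y=3.5$ has vertices at $x=\tau,\frac{\tau+2}{2},2,2.5,3,3.5,4$, where the midpoint $(\tau+2)/2$ comes from the dashed edge and its cofactor is forced to zero.

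The key step is to write, for each solid edge, the moment equations \eqref{eq:moment} with $d=4$ in the surviving cofactors. Each edge has at most $6$ surviving vertices, so its kernel is at most $1$-dimensional. A kernel vector is a multiple of the divided-difference weights $\delta_i=c\prod_{j\ne i}(s_i-s_j)^{-1}$. The four one-parameter families must then agree at the four crossing points. This gives a $4\times4$ reduced matrix in the parameters $c_1,\dots,c_4$, in the spirit of the reduction of \cite{HuangChen2024}, and $\dim W$ is its nullity.

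We then compute the determinant of this reduced matrix as a rational function of $\tau$. On the edges with coordinates independent of $\tau$, the weights are fixed rational numbers. On the edge $y=3.5$, the weights at $x=2.5$ and $x=3.5$ depend on $\tau$ through the factor $(2.5-\tau)(3.5-\tau)$ and similar factors. We expect the determinant to factor as a nonzero constant times $(\tau-1)$ times factors nonvanishing on $(0,2)$. The rank would then be $4$ for $\tau\ne1$ and $3$ at $\tau=1$, which gives $101$ and $100$.

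The main obstacle is getting the correct surviving vertex sets on each solid edge, including which vertices of each edge are genuine T-nodes versus vertices lying on cross lines, and keeping exact signs in the determinant. We will verify the $\tau=1$ null vector explicitly, using the symmetry of $\T_4(1)$ under the reflection swapping $x$ and $6-y$-type roles visible in Figure~\ref{fig:d4mesh}. This exhibits a nonzero solution, and a separate check at a generic $\tau$ confirms full rank.
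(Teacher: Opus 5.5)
Your reduction is sound, and it is essentially the paper's argument written in dual variables.

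\textbf{What matches.} The base count $100$ agrees with the paper's $132-\rank M_4(\tau)$. The $57-32=25$ interior vertices off $L(\T)$ are exactly the cross-cut crossings, so $\dim S_4(\T_4(\tau))=100+\dim\ker M_4(\tau)$. The four three-vertex edges are killed in the same way, and each solid edge keeps four mono-vertices and two crossings.

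\textbf{Where the routes differ.} The paper eliminates the sixteen mono-vertices with $p_\ell$ and obtains $K_4(\tau)$, with one row per solid edge and one column per crossing. You instead parametrize each one-dimensional local kernel by divided-difference weights and match them at the crossings. On an edge with crossings $z_m,z_{m'}$, your weight is $w_m=1/\bigl(p_\ell(z_m)(z_m-z_{m'})\bigr)$. So both $4\times4$ systems encode the same four-cycle and are singular under the same condition. Neither formulation gains much over the other here.

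\textbf{The gap.} You have not done the computation that actually proves the theorem: you only \emph{expect} the factor $\tau-1$. A check at one generic $\tau$ would not cover all $\tau\in(0,2)\setminus\{1\}$.

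The computation is short. Go around the cycle $y=5/2$, $x=5/2$, $y=7/2$, $x=7/2$. The product of weight ratios equals $1$ if and only if $5(5-2\tau)=3(7-2\tau)$, that is, $\tau=1$. The $\tau$-dependent weights on $y=7/2$ are finite and nonzero on $(0,2)$. This matches the paper's $\det K_4(\tau)=60(\tau-1)$. At $\tau=1$, the crossing cofactors are proportional to $(3,5,5,3)$ in the order $(5/2,5/2)$, $(5/2,7/2)$, $(7/2,5/2)$, $(7/2,7/2)$.

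\textbf{Two further corrections.}
First, the endpoints $v_2,v_3,v_4,v_8,v_9,v_{13},v_{14},v_{15}$ of the dashed edges lie on level-zero cross-cuts, not on the solid edges. The vertices shared with the solid edges are the dashed midpoints $(5/2,9/2)$, $(7/2,3/2)$, $((\tau+2)/2,7/2)$ and $(9/2,5/2)$. Your treatment of $y=7/2$ already uses the right one, so only the description needs fixing.
Second, your last sentence swaps the values: rank $3$ at $\tau=1$ gives $101$, and rank $4$ otherwise gives $100$.
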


The proof of Theorem~\ref{thm:d4} is given in Appendix~\ref{app:d4},
where the dimension calculation is reduced to an explicit
$4\times4$ matrix. The initial tensor-product space has dimension
$100$ for every $0<\tau<2$. Hence this refinement adds one degree of freedom
when $\tau=1$ and none otherwise.

Theorems~\ref{thm:d3} and~\ref{thm:d4} show that unrestricted
hierarchical refinement does not ensure dimensional stability.
The examples also exhibit refinements that introduce vanishable
T $l$-edges without increasing the dimension. This motivates imposing a
restriction that excludes such edges at each level.

\subsection{Refinement without vanishable T \texorpdfstring{$l$}{l}-edges}
\label{sec:examples}
\label{sec:no-vanishable-examples}

We now require every newly generated T $l$-edge to be non-vanishable
immediately after subdivision. Equivalently, each such edge must
contain at least $d+2$ vertices. The next two examples satisfy this
condition and, in addition, strictly increase the dimension at
every refinement level. Nevertheless, their spline-space dimensions
are not stable.

\subsubsection{An example of degree five}\label{sec:d5}

Start with a $7\times7$ tensor-product cell mesh. At the first and
second refinement levels, select
\begin{align}
  \cR^{(5)}_1
    &=\{1,2,3\}^2\cup\{3,4,5\}^2,
      \label{eq:d5R1}\\
  \cR^{(5)}_2
    &=\{4,5,6\}^2\cup\{6,7,8\}^2,
      \label{eq:d5R2}
\end{align}
where $A^2=A\times A$ for a set of integers $A$.
Each selected region consists of two $3\times3$ cell submeshes
that overlap in one cell. The second region contains only
level-one cells generated by the first subdivision. There are
seventeen selected cells at each level, and the cell numbers are
\[
  49\longrightarrow100\longrightarrow151.
\]

Let $x_0<\cdots<x_7$ and $y_0<\cdots<y_7$ be the initial grid-line
coordinates. Consider the two meshes
\begin{equation}\label{eq:d5knots}
  \begin{aligned}
    \T_{5,A}:&\quad x_i=y_i=i,\qquad 0\le i\le7,\\
    \T_{5,B}:&\quad x_1=\frac{11}{10},\quad
      x_i=i\ (i\ne1),\quad y_i=i.
  \end{aligned}
\end{equation}
All subsequent coordinates are determined by midpoint subdivision.
The two meshes have the same levelwise subdivisions and are
structurally isomorphic. We write $\T_{5,A}^k$ and $\T_{5,B}^k$
for the meshes at level $k$, so that $\T_{5,A}=\T_{5,A}^2$ and
$\T_{5,B}=\T_{5,B}^2$.

The mesh $\T_{5,A}$ is shown in Figure~\ref{fig:d5mesh}. Blue and
red line segments are introduced at levels one and two,
respectively. Dashed rectangles bound the two submeshes selected
at the first level, and dotted rectangles bound those selected
at the second level. Every newly generated T $l$-edge contains at
least seven vertices, so no vanishable T $l$-edges are introduced
for $d=5$.

\begin{figure}[htbp]
\centering
\begin{tikzpicture}[scale=.78,line cap=round,line join=round]
  \fill[black!7] (1,1) rectangle (4,4);
  \fill[black!7] (3,3) rectangle (6,6);
  \foreach \x in {0,...,7}{\draw[gray!55] (\x,0)--(\x,7);}
  \foreach \y in {0,...,7}{\draw[gray!55] (0,\y)--(7,\y);}
  \foreach \i/\j in {
    1/1,1/2,1/3,2/1,2/2,2/3,3/1,3/2,3/3,
    3/4,3/5,4/3,4/4,4/5,5/3,5/4,5/5
  }{
    \draw[blue!65!black,line width=.45pt]
      (\i+.5,\j)--(\i+.5,\j+1);
    \draw[blue!65!black,line width=.45pt]
      (\i,\j+.5)--(\i+1,\j+.5);
  }
  \foreach \i/\j in {
    4/4,4/5,4/6,5/4,5/5,5/6,6/4,6/5,6/6,
    6/7,6/8,7/6,7/7,7/8,8/6,8/7,8/8
  }{
    \draw[red!72!black,line width=.5pt]
      ({(\i+.5)/2},\j/2)--({(\i+.5)/2},{(\j+1)/2});
    \draw[red!72!black,line width=.5pt]
      (\i/2,{(\j+.5)/2})--({(\i+1)/2},{(\j+.5)/2});
  }
  \draw[blue!65!black,thick,dashed] (1,1) rectangle (4,4);
  \draw[blue!65!black,thick,dashed] (3,3) rectangle (6,6);
  \draw[red!72!black,thick,densely dotted] (2,2) rectangle (3.5,3.5);
  \draw[red!72!black,thick,densely dotted] (3,3) rectangle (4.5,4.5);
  \draw[line width=.7pt] (0,0) rectangle (7,7);
\end{tikzpicture}
\caption{The hierarchical T-mesh $\T_{5,A}$.}
\label{fig:d5mesh}
\end{figure}

The CNDC decomposition and exact rank certificates are given in
Appendix~\ref{app:d5}. They yield the dimensions
\begin{equation}\label{eq:d5sequence}
  \begin{aligned}
    \T_{5,A}:&\quad144\longrightarrow147\longrightarrow150,\\
    \T_{5,B}:&\quad144\longrightarrow147\longrightarrow149.
  \end{aligned}
\end{equation}
Thus, both refinement levels add degrees of freedom, but
\[
  \dim S_5(\T_{5,A})=150\ne149=\dim S_5(\T_{5,B}).
\]
The dimension is therefore unstable, although every refinement
level is no-vanishable.

\subsubsection{An example of degree six}\label{sec:d6}

Start with a $10\times10$ tensor-product cell mesh and select
\begin{align}
  \cR^{(6)}_1
    &=\{1,2,3,4\}^2\cup\{4,5,6,7\}^2,
      \label{eq:d6R1}\\
  \cR^{(6)}_2
    &=\{6,7,8,9\}^2\cup\{9,10,11,12\}^2
      \label{eq:d6R2}
\end{align}
at the first and second refinement levels, respectively.
Each region consists of two $4\times4$ cell submeshes with a
one-cell overlap. All cells selected at the second level are
level-one cells. The cell numbers are
\[
  100\longrightarrow193\longrightarrow286.
\]

Consider the initial grid-line coordinates
\begin{equation}\label{eq:d6knots}
  \begin{aligned}
    \T_{6,A}:&\quad x_i=y_i=i,\qquad 0\le i\le10,\\
    \T_{6,B}:&\quad x_1=\frac{11}{10},\quad
      x_i=i\ (i\ne1),\quad y_i=i.
  \end{aligned}
\end{equation}
The same midpoint subdivisions give two structurally isomorphic
hierarchical T-meshes. As before, $\T_{6,A}^k$ and $\T_{6,B}^k$
denote the level-$k$ meshes, with the final meshes at level two.
Figure~\ref{fig:d6mesh} shows $\T_{6,A}$,
using the same line conventions as Figure~\ref{fig:d5mesh}.
Every new T $l$-edge has at least nine vertices and is therefore
non-vanishable for $d=6$.

\begin{figure}[htbp]
\centering
\begin{tikzpicture}[scale=.56,line cap=round,line join=round]
  \fill[black!7] (1,1) rectangle (5,5);
  \fill[black!7] (4,4) rectangle (8,8);
  \foreach \x in {0,...,10}{\draw[gray!55] (\x,0)--(\x,10);}
  \foreach \y in {0,...,10}{\draw[gray!55] (0,\y)--(10,\y);}
  \foreach \a in {1,4}{
    \foreach \i in {0,...,3}{\foreach \j in {0,...,3}{
      \draw[blue!65!black,line width=.45pt]
        (\a+\i+.5,\a+\j)--(\a+\i+.5,\a+\j+1);
      \draw[blue!65!black,line width=.45pt]
        (\a+\i,\a+\j+.5)--(\a+\i+1,\a+\j+.5);
    }}
  }
  \foreach \a in {6,9}{
    \foreach \i in {0,...,3}{\foreach \j in {0,...,3}{
      \draw[red!72!black,line width=.5pt]
        ({(\a+\i+.5)/2},{(\a+\j)/2})
        --({(\a+\i+.5)/2},{(\a+\j+1)/2});
      \draw[red!72!black,line width=.5pt]
        ({(\a+\i)/2},{(\a+\j+.5)/2})
        --({(\a+\i+1)/2},{(\a+\j+.5)/2});
    }}
  }
  \draw[blue!65!black,thick,dashed] (1,1) rectangle (5,5);
  \draw[blue!65!black,thick,dashed] (4,4) rectangle (8,8);
  \draw[red!72!black,thick,densely dotted] (3,3) rectangle (5,5);
  \draw[red!72!black,thick,densely dotted] (4.5,4.5) rectangle (6.5,6.5);
  \draw[line width=.7pt] (0,0) rectangle (10,10);
\end{tikzpicture}
\caption{The hierarchical T-mesh $\T_{6,A}$.}
\label{fig:d6mesh}
\end{figure}

The detailed calculation in Appendix~\ref{app:d6} gives the
dimensions at successive levels as
\begin{equation}\label{eq:d6sequence}
  \begin{aligned}
    \T_{6,A}:&\quad256\longrightarrow267\longrightarrow277,\\
    \T_{6,B}:&\quad256\longrightarrow267\longrightarrow276.
  \end{aligned}
\end{equation}
Again, every refinement level strictly increases the dimension,
but the final dimensions are different.

The results of the four examples are summarized in
Table~\ref{tab:verified-dimensions}. Here, ``vanishable'' refers to
the presence of vanishable T $l$-edges at some refinement level.
The two dimensions in each row correspond to the two meshes
specified in the relevant example.

\begin{table}[htbp]
\centering
\caption{Dimensions in the four instability examples.}
\label{tab:verified-dimensions}
\begin{tabular}{@{}ccccc@{}}
\toprule
$d$ & Refinement levels & Number of cells
  & Vanishable T $l$-edges & Dimensions\\
\midrule
3 & 3 & 76  & Yes & $66,65$\\
4 & 1 & 60  & Yes & $101,100$\\
5 & 2 & 151 & No  & $150,149$\\
6 & 2 & 286 & No  & $277,276$\\
\bottomrule
\end{tabular}
\end{table}

\begin{theorem}\label{thm:main}
Excluding vanishable T $l$-edges at every refinement level is not
a sufficient condition for dimensional stability over hierarchical
T-meshes in general. In particular, the examples for $d=5,6$
are dimensionally unstable even though every refinement level
is no-vanishable and strictly increases the dimension.
\end{theorem}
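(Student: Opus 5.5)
The theorem is a counterexample statement, so the proof reduces to verifying, for $d=5$ and $d=6$, three facts about the meshes $\T_{5,A},\T_{5,B}$ and $\T_{6,A},\T_{6,B}$. First, the two meshes in each pair are structurally isomorphic. Second, every refinement level is no-vanishable and strictly increases the dimension. Third, the final dimensions differ. By Definition~\ref{def:stability}, the third fact means $\T_{d,A}$ is dimensionally unstable in the collection $U$ of hierarchical T-meshes with prescribed levelwise cross subdivisions. Together with the first two facts, this gives the theorem.

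Structural isomorphism is immediate. The two meshes differ only in the initial coordinate $x_1$, which moves from $1$ to $11/10$. This preserves the order of all initial grid lines, and the subdivided cell indices and the midpoint rule are identical. So the natural bijection of $l$-edges preserves types, intersections and the ordering of parallel edges, as in Definition~\ref{def:structural-isomorphism}.

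The no-vanishable property is a combinatorial count. Each selected region is a union of two $(d-2)\times(d-2)$ blocks sharing one corner cell, since $3=5-2$ and $4=6-2$. A new T $l$-edge either runs through a whole block or through the union near the overlap. Along it, the vertices are the original grid lines plus the new midpoints, giving at least $2(d-2)+1=2d-3$ vertices. This is $\ge d+2$ exactly when $d\ge5$, yielding $7$ for $d=5$ and $9$ for $d=6$. I will check that edges meeting the overlap only get longer. I will also check that at level two the selected level-one cells lie inside the first refined region, so the new edges are bounded by the level-one T-nodes and still carry at least $2d-3$ vertices.

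The substance is the dimension sequences \eqref{eq:d5sequence} and \eqref{eq:d6sequence}. I would use the smoothing cofactor method: $\dim S_d(\T)=(n_x+d)(n_y+d)+\dim W$ for the initial tensor grid, where $W$ is the solution space of the global conformality system \eqref{eq:edge-conformality-matrix} over all T $l$-edges. The initial values are $144=12^2$ and $256=16^2$. Following \cite{HuangChen2024}, I split $L(\T)$ into a diagonalizable part and a completely non-diagonalizable component (CNDC). The diagonalizable T $l$-edges can be eliminated in order, each contributing $n(\ell)-d-1$ free cofactors, which is independent of geometry. Then $\dim W$ equals this topological count plus the nullity of a reduced matrix on the multi-vertices of the CNDC.

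The main obstacle is computing this reduced matrix exactly for both parameter choices, because its entries are Vandermonde-type expressions in the knot coordinates. Here $x_1=1$ versus $11/10$ should drop the rank by exactly one in the uniform case. I would first isolate the CNDC, which I expect to be generated near the one-cell overlaps where level-one and level-two edges interlock. I would then compute ranks over $\Q$ by exact fraction-free elimination, giving certificates: a nonzero minor for the full-rank case and an explicit kernel vector for the deficient case. For the intermediate levels, the level-one meshes contain no geometric coincidence affecting rank, so their dimensions $147$ and $267$ agree for both parameters. The final counts $150$ vs.\ $149$ and $277$ vs.\ $276$ then follow.
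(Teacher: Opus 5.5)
Your route matches the paper's proof in the appendices on the degree-five and degree-six examples. It uses structural isomorphism, the $2r+1$ vertex count for no-vanishability, the cofactor dimension formula, extraction of diagonalizable edges, and mono-vertex elimination to a reduced matrix on the CNDC multi-vertices. It then computes exact ranks for $u=0,1$. Your prediction that the uniform geometry loses exactly one rank is also right. Three points need repair.

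First, an extracted edge contributes $m(\ell\mid L)-(d+1)$ free cofactors, not $n(\ell)-d-1$. Here $m(\ell\mid L)$ counts its vertices lying on no other \emph{remaining} edge; shared vertices are assigned to the remaining component. At level one for $d=5$, the central edges $H(7/2;1,6)$ and $V(7/2;1,6)$ each have eleven vertices. Their relative mono-vertex counts are only six and seven, so together they give one free variable, not ten. Your count would corrupt the level dimensions $147,150,149$ on which the ``strictly increases'' clause rests.

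Second, the claim that level one has ``no geometric coincidence affecting rank'' is exactly what must be shown, and you give no argument. The paper does it structurally. After extracting the two central edges and eliminating mono-vertices, the multi-vertices form two disjoint $r\times r$ crossing arrays, with $r=2$ for $d=5$ and $r=3$ for $d=6$. In each array all horizontal edges share one mono-coordinate set and all vertical edges another. The equations therefore read $\sum_i\omega_i\delta_{ij}=0$ and $\sum_j\vartheta_j\delta_{ij}=0$ with nonzero weights. The substitution $\zeta_{ij}=\omega_i\vartheta_j\delta_{ij}$ turns these into zero row and column sums, of rank $2r-1$ whatever the weights. This structure survives the map $g_u$, giving $\rank M_5^1(u)=12+40+3+3=58$ and $\rank M_6^1(u)=14+72+5+5=96$ for both $u$.

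Third, and most seriously, your certification plan presumes that one geometry yields a full-rank reduced matrix. Neither does. $K_5(u)\in\Q^{36\times37}$ has rank $32$ for $u=0$ and $33$ for $u=1$, and $K_6(u)\in\Q^{58\times73}$ has rank $55$ and $56$. The kernels therefore have dimensions $5,4$ and $18,17$. In particular, no nonzero maximal minor exists for $u=1$. A single null vector for $u=0$ cannot pin down a rank several below the matrix size.

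Each of the four ranks must be certified from both sides. The lower bound comes from a nonsingular $r\times r$ submatrix $A=K[\mathcal I,\mathcal J]$. The upper bound comes from the vanishing of the Schur complement $D-CA^{-1}B$, or equivalently a full kernel basis. This is what the paper supplies through its index sets $\mathcal I_u,\mathcal J_u$ and the zero complements $0_{4\times5}$, $0_{3\times4}$, $0_{3\times18}$ and $0_{2\times17}$. Plain exact elimination would expose the true ranks, but certificates of the kind you describe would not establish $150\ne149$ or $277\ne276$.

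Note also that the CNDC is not localized near the overlaps. In the final meshes it consists of all T $l$-edges except the two central ones: $18$ edges for $d=5$ and $26$ for $d=6$. This is why the reduced matrices are so large.
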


Thus, excluding vanishable T $l$-edges does not remove the
obstruction to basis construction identified in
Proposition~\ref{prop:stability-basis-map}. A stronger refinement
condition is needed to ensure dimensional stability on the
mesh class used for the subsequent construction.

\subsection{Template translations and the mesh assumption}
\label{sec:template-refinement}
\label{sec:channel}
\label{sec:stable-assumption}

We now describe a refined region as a union of translated copies
of a fixed cell template. The translations are taken in the
cell-index grid introduced at the beginning of this section.
For a nonuniform mesh, the corresponding geometric submeshes
need not be Euclidean translates. This description therefore
restricts the arrangement of the selected cells without imposing
equal knot intervals.

\begin{definition}[Translated template]\label{def:template}
For an integer $m\ge1$, an \emph{$m\times m$ template} consists
of $m\times m$ consecutive cells in the two coordinate directions.
Its translate with initial index $(a,b)$ is denoted by
\begin{equation}\label{eq:qblock}
  \cQ_m(a,b)
  =\{(i,j)\in\mathbb Z^2:
      a\le i\le a+m-1,\ b\le j\le b+m-1\},
  \qquad a,b\in\mathbb Z.
\end{equation}
A refined region $\cR$ is \emph{generated by translated
$m\times m$ templates} if it is a finite union of such sets.
\end{definition}

The templates may overlap, and their union need not be rectangular.
Only cells available for subdivision at the current level are
included in $\cR$.

\begin{definition}[Channel length]\label{def:channel}
Let $\cR$ be a nonempty set of selected cell indices and let
$\boldsymbol q\in\cR$.
A translated template $\cQ_m(a,b)$ is called an \emph{$m$-channel
through $\boldsymbol q$ in $\cR$} if
\[
  \boldsymbol q\in\cQ_m(a,b)\subseteq\cR.
\]
Define
\[
  \operatorname{chl}_{\cR}(\boldsymbol q)
  =\max\{m:\boldsymbol q\in\cQ_m(a,b)\subseteq\cR
            \text{ for some }a,b\}.
\]
The \emph{channel length} of $\cR$ is
\begin{equation}\label{eq:channel-length}
  \chl(\cR)=\min_{\boldsymbol q\in\cR}\operatorname{chl}_{\cR}(\boldsymbol q).
\end{equation}
\end{definition}

\begin{example}\label{ex:template}\label{ex:channel}
Consider
\[
  \cR=\cQ_3(0,0)\cup\cQ_3(2,2),
\]
as shown in Figure~\ref{fig:channel}. The two templates overlap
in one cell, and every cell belongs to a $3\times3$ template
contained in $\cR$. The lower-left cell has index
$\boldsymbol q_*=(0,0)$ and belongs to no contained $4\times4$ template. Hence $\chl(\cR)=3$.
\end{example}

\begin{figure}[htbp]
\centering
\begin{tikzpicture}[scale=.74,line cap=round,line join=round]
  \fill[gray!22] (0,0) rectangle (3,3);
  \fill[gray!22] (2,2) rectangle (5,5);
  \fill[gray!55] (0,0) rectangle (1,1);
  \foreach \x in {0,...,5}{
    \draw[line width=.45pt] (\x,0)--(\x,5);
  }
  \foreach \y in {0,...,5}{
    \draw[line width=.45pt] (0,\y)--(5,\y);
  }
  \draw[line width=1pt,dashed] (0,0) rectangle (3,3);
  \draw[line width=1pt,densely dotted] (2,2) rectangle (5,5);
  \node[fill=white,inner sep=1pt] at (1.5,2.65) {$\cQ_3(0,0)$};
  \node[fill=white,inner sep=1pt] at (3.5,4.65) {$\cQ_3(2,2)$};
  \node at (.5,.5) {$\boldsymbol q_*$};
\end{tikzpicture}
\caption{A refined region of channel length three.}
\label{fig:channel}
\end{figure}

\begin{proposition}\label{prop:channeltemplate}
For a nonempty refined region $\cR$ and an integer $m\ge1$,
$\chl(\cR)\ge m$ if and only if $\cR$ is generated by translated
$m\times m$ templates. Consequently,
\begin{equation}\label{eq:channelmax}
  \chl(\cR)
  =\max\{m:\cR\text{ is generated by translated }
                    m\times m\text{ templates}\}.
\end{equation}
\end{proposition}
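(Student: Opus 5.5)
The argument uses only Definitions~\ref{def:template} and~\ref{def:channel}. The key auxiliary fact is a nesting property: every $m\times m$ template contains, through each of its cells, a $k\times k$ template for every $1\le k\le m$.

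\textbf{Nesting property.} Let $\boldsymbol q=(i,j)\in\cQ_m(a,b)$ and $1\le k\le m$. We must find $a',b'$ with
\[
  a\le a'\le i\le a'+k-1\le a+m-1,
\]
and similarly for $b'$. Take
\[
  a'=\min\{i,\;a+m-k\},
\]
and define $b'$ analogously. Then $a'\ge a$, since $i\ge a$ and $a+m-k\ge a$. We have $a'\le i$ by construction. For $a'+k-1\ge i$, consider the two cases: if $a'=i$ this is clear; if $a'=a+m-k$, then $a'+k-1=a+m-1\ge i$. Finally $a'+k-1\le a+m-1$ because $a'\le a+m-k$. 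Hence $\boldsymbol q\in\cQ_k(a',b')\subseteq\cQ_m(a,b)$.

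The consequence is that, for each $\boldsymbol q\in\cR$, the set of admissible $m$ in the definition of $\operatorname{chl}_{\cR}(\boldsymbol q)$ is downward closed. It is also nonempty, since $\cQ_1(\boldsymbol q)$ qualifies, and bounded, since $\cR$ is finite. So $\operatorname{chl}_{\cR}(\boldsymbol q)\ge m$ holds exactly when some $m\times m$ template contains $\boldsymbol q$ and lies in $\cR$.

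\textbf{Forward direction.} Suppose $\chl(\cR)\ge m$. Then every $\boldsymbol q\in\cR$ has $\operatorname{chl}_{\cR}(\boldsymbol q)\ge m$, so by the consequence above we may pick a template $\cQ_m(a_{\boldsymbol q},b_{\boldsymbol q})$ with
\[
  \boldsymbol q\in\cQ_m(a_{\boldsymbol q},b_{\boldsymbol q})\subseteq\cR.
\]
The union of these templates over $\boldsymbol q\in\cR$ contains $\cR$ and is contained in $\cR$, so it equals $\cR$. It is a finite union because $\cR$ is finite, so $\cR$ is generated by translated $m\times m$ templates.

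\textbf{Reverse direction.} Suppose $\cR=\bigcup_t\cQ_m(a_t,b_t)$. Each $\boldsymbol q\in\cR$ lies in some $\cQ_m(a_t,b_t)\subseteq\cR$, so $\operatorname{chl}_{\cR}(\boldsymbol q)\ge m$. Taking the minimum over $\boldsymbol q\in\cR$ gives $\chl(\cR)\ge m$.

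\textbf{Formula \eqref{eq:channelmax}.} The set
\[
  M=\{m:\cR\text{ is generated by translated }m\times m\text{ templates}\}
\]
equals $\{m\ge1:m\le\chl(\cR)\}$ by the equivalence just proved. It is nonempty (it contains $1$) and bounded above by the finite number $\chl(\cR)$, so its maximum is $\chl(\cR)$.

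The only step needing care is the nesting property, specifically choosing the sub-template so that it fits inside $\cQ_m(a,b)$ while still containing $\boldsymbol q$; the choice $a'=\min\{i,a+m-k\}$ handles this. The rest is bookkeeping with finite minima and maxima.
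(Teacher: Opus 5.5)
Your proof is correct and follows the same route as the paper: for each $\boldsymbol q\in\cR$ you pick an $m\times m$ subtemplate containing $\boldsymbol q$ inside a contained larger template, take the union to recover $\cR$, and prove the converse directly. The only difference is that you verify the subtemplate choice explicitly via $a'=\min\{i,a+m-k\}$ and check that $\operatorname{chl}_{\cR}(\boldsymbol q)$ is well defined, whereas the paper asserts both without proof.
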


The proof is given in Appendix~\ref{app:templates}.
This equivalence expresses the same requirement in two ways:
every selected cell must belong to a contained template of the
prescribed size. It permits the refinement condition to be
stated either through template translations or through channel
length.

For the exclusion of vanishable T $l$-edges, define
\begin{equation}\label{eq:qd}
  q_d=\left\lceil\frac{d+1}{2}\right\rceil.
\end{equation}

\begin{proposition}\label{prop:minimaltemplate}
If a refined region $\cR$ satisfies $\chl(\cR)\ge q_d$, then
cross subdivision of its cells introduces no vanishable T
$l$-edges for $S_d$. Among conditions requiring a union of
translated $m\times m$ templates, $q_d$ is the smallest side
that guarantees this property for every refined region.
\end{proposition}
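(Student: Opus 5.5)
Two things must be shown: if $\chl(\cR)\ge q_d$, every new T $l$-edge has at least $d+2$ vertices; and for $m=q_d-1$ some region generated by $m\times m$ templates produces a vanishable T $l$-edge. By Proposition~\ref{prop:channeltemplate}, $\chl(\cR)\ge m$ is equivalent to $\cR$ being generated by translated $m\times m$ templates, so the two formulations can be used interchangeably.

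\emph{Sufficiency.} Take a horizontal $l$-edge $\ell$ that is newly created by subdividing $\cR$ and is a T $l$-edge in the refined mesh. It lies on the horizontal midline of a maximal horizontal run of consecutive selected cells in some row $j$, say cells $(a,j),\dots,(a+r-1,j)$. The new segments in neighbouring rows lie on different horizontal lines, so the endpoints of $\ell$ are the left edge of the first cell and the right edge of the last cell. Pick any cell $\boldsymbol q$ of the run. Because $\chl_{\cR}(\boldsymbol q)\ge q_d$, there is a template $\cQ_{q_d}(a',b')$ with $\boldsymbol q\in\cQ_{q_d}(a',b')\subseteq\cR$. The template meets row $j$ in $q_d$ consecutive selected cells, and these lie inside the run, so $r\ge q_d$.

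Now count the vertices on $\ell$. The $r+1$ old vertical cell edges crossing row $j$ (including the two ends) each meet $\ell$ in a vertex. The $r$ new vertical midlines of the cells also meet $\ell$. These positions are distinct, so $n(\ell)\ge 2r+1\ge 2q_d+1$. Since $2q_d\ge d+1$, this gives $n(\ell)\ge d+2$, and $\ell$ is non-vanishable. The vertical case is symmetric.

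There is one point to check here. An edge coming from a run may be extended or absorbed by collinear older edges; for example, the midline may continue into a neighbouring cell that was refined earlier, or reach the boundary. In that case the $l$-edge either becomes a ray or cross-cut, and so is not a T $l$-edge, or it is longer and has only more vertices. Either way the bound still holds, because vertices are only added. I expect this to be the main place where care is needed: showing that the maximal $l$-edge always contains the full midline of the run, together with all its $2r+1$ crossing points, regardless of previous levels.

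\emph{Minimality.} Let $m=q_d-1\ge1$ and take an isolated $m\times m$ template in the interior of a uniform region of the current level, with at least one unselected layer of cells around it, as in Example~\ref{ex:vanishable-levels}. This region is a union of one $m\times m$ template. Each new midline is a T $l$-edge with exactly $2m+1$ vertices: the two endpoints are T-nodes on unrefined cell edges, and interior crossings come only from the template's grid lines and midlines. Since $2m+1=2q_d-1\le d+1$ by the definition of the ceiling, the edge is vanishable. The same example shows that no smaller side $m<q_d$ works either, which proves that $q_d$ is the smallest side with this property.
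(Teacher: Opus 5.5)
Your proof is correct and is essentially the paper's own argument. You bound a maximal run by $r\ge q_d$ using a contained $q_d\times q_d$ template, count $2r+1\ge 2q_d+1\ge d+2$ vertices on the new midline, and use an isolated interior $m\times m$ submesh with $2m+1\le d+1$ for minimality. The continuation issue you single out is handled in the paper the same way (any extension only adds vertices), and in fact it cannot arise here: the level-$k$ midlines lie on lines not present in $\T^{k-1}$, and the run is maximal.
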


The threshold follows from the lower bound $2m+1$ for the number
of vertices on a new T $l$-edge crossing $m$ consecutive selected
cells. In particular, an isolated $m\times m$ submesh gives
vanishable T $l$-edges when $m<q_d$. The detailed argument is
given in Appendix~\ref{app:templates}.

The \emph{minimal no-vanishable template condition} requires
$\chl(\cR_k)\ge q_d$ at every refinement level. Here,
minimality refers only to a uniform square-template condition;
it is not a characterization of all no-vanishable refinements.

For the examples in Section~\ref{sec:examples}, the refinement
regions can now be written as
\begin{align*}
  \cR^{(5)}_1&=\cQ_3(1,1)\cup\cQ_3(3,3),&
  \cR^{(5)}_2&=\cQ_3(4,4)\cup\cQ_3(6,6),\\
  \cR^{(6)}_1&=\cQ_4(1,1)\cup\cQ_4(4,4),&
  \cR^{(6)}_2&=\cQ_4(6,6)\cup\cQ_4(9,9).
\end{align*}
Their channel lengths satisfy
\begin{equation}\label{eq:qequalsdminus2}
  \begin{aligned}
    \chl(\cR^{(5)}_1)=\chl(\cR^{(5)}_2)&=3=5-2,\\
    \chl(\cR^{(6)}_1)=\chl(\cR^{(6)}_2)&=4=6-2.
  \end{aligned}
\end{equation}
Thus, for $d=5,6$, even refinement by translated
$(d-2)\times(d-2)$ templates does not ensure dimensional
stability. This is stronger than the failure of unrestricted
refinement established in Section~\ref{sec:vanishable-examples}.

In contrast, the dimension formula in
\cite[Theorem~4.2]{HuangChen2026} gives stability under
$(d-1)\times(d-1)$ tensor-product subdivisions for $d\ge3$.
We therefore adopt the following assumption.

\begin{assumption}[Translated $(d-1)\times(d-1)$ refinement]
\label{ass:stabletemplate}
Let $d\ge2$. At every refinement level $k$, the nonempty region
$\cR_k$ selected for cross subdivision is a union of translated
$(d-1)\times(d-1)$ templates. Equivalently,
\begin{equation}\label{eq:stable-channel}
  \chl(\cR_k)\ge d-1.
\end{equation}
\end{assumption}

\begin{proposition}[\cite{DengChenJin2013,ZengEtAl2015,HuangChen2026}]
\label{prop:stable-template}
If a hierarchical T-mesh $\T$ satisfies
Assumption~\ref{ass:stabletemplate}, then the dimension of
$S_d(\T)$ is stable on its admissible structurally isomorphic
class.
\end{proposition}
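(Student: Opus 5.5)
The plan is to reduce this proposition to the cited dimension formula, \cite[Theorem~4.2]{HuangChen2026}, and to check that its hypotheses are met on the whole admissible class. I will handle $d\ge3$ first and treat the small degrees separately.

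For $d\ge3$, I will first translate Assumption~\ref{ass:stabletemplate} into the hypothesis used in \cite{HuangChen2026}. That hypothesis is that each level cross-subdivides a collection of $(d-1)\times(d-1)$ tensor-product cell blocks, with partial overlaps allowed. By Proposition~\ref{prop:channeltemplate}, $\chl(\cR_k)\ge d-1$ means $\cR_k$ is a finite union of translates $\cQ_{d-1}(a,b)$. Each translate is a block of consecutive level-$(k-1)$ cells in the index grid, and it is a tensor-product block of $\T^{k-1}$ because only available cells are selected. Hence the refinement is exactly of the type covered by the cited theorem.

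Next I will use the structure of the cited result. Its levelwise formula expresses $\dim S_d(\T^k)-\dim S_d(\T^{k-1})$ in terms of combinatorial data only: cell counts, vertex and edge counts of the new T-connected component, and the dimensions of conformality vector spaces over tensor-product T-connected components. These quantities depend only on the selected indices and on incidences and orderings of $l$-edges. Now let $\widehat\T\in[\T]_{\mathrm I}$ in the admissible collection $U$. It has the same selected indices and midpoint rule, and its grid-line coordinates preserve order and coincidences. So every level of $\widehat\T$ satisfies the same assumption with the same combinatorial data, and a structurally isomorphic map carries these data over level by level. I then sum the increments from $\dim S_d(\T^0)=(n_x+d)(n_y+d)$ (Example~\ref{ex:spline-space}), which is geometry-independent. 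This gives equal dimensions across the class, as Definition~\ref{def:stability} requires.

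For $d=2$ the assumption is $\chl\ge1$, so it is unrestricted. Here I would invoke the biquadratic hierarchical dimension formula of \cite{DengChenJin2013}, extended in \cite{ZengEtAl2015}, which is again topological. One could alternatively note that for $d=2$ every new T $l$-edge has at least $3=d+1$ vertices and argue via diagonalizability as in \cite{LiDeng2016}, but citing the formula is cleaner.

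The main obstacle is the first step for $d\ge3$. I must verify that arbitrary unions of overlapping translated templates, whose union need not be rectangular and which sit inside a previously refined region, really match the configurations allowed in \cite[Theorem~4.2]{HuangChen2026}. In particular, I must check that the cited conformality-space dimensions are computed without any genericity assumption on knot values. I would first try to confirm this by inspecting that the proof there rests on the CNDC decomposition of \cite{HuangChen2024}, and that the completely non-diagonalizable part has a reduced matrix of full rank for every choice of knots, rather than generically.
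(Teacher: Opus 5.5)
Your proposal follows the paper's justification essentially step for step. For $d\ge3$ you use Proposition~\ref{prop:channeltemplate} to rewrite Assumption~\ref{ass:stabletemplate} as cross subdivision of overlapping $(d-1)\times(d-1)$ tensor-product blocks, invoke \cite[Theorem~4.2]{HuangChen2026}, and note that the data in that formula are invariant on the admissible class; for $d=2$ you cite \cite{DengChenJin2013,ZengEtAl2015}. The one point the paper makes explicit that you omit is that $d-1\ge q_d$, so Proposition~\ref{prop:minimaltemplate} rules out vanishable T $l$-edges, and the paper uses this as part of confirming that the cited theorem applies.
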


\begin{corollary}\label{cor:dminus2}
For $d=5$ and $d=6$, replacing $(d-1)\times(d-1)$ templates in
Assumption~\ref{ass:stabletemplate} by
$(d-2)\times(d-2)$ templates does not ensure dimensional
stability, even if every refinement level is no-vanishable
and increases the dimension.
\end{corollary}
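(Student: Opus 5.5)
The corollary follows by assembling results already stated; no new rank computation is needed. I will exhibit, for each $d\in\{5,6\}$, a hierarchical T-mesh whose refined regions are unions of translated $(d-2)\times(d-2)$ templates at every level, are no-vanishable, strictly increase the dimension at every level, and are nevertheless dimensionally unstable. The witnesses are the meshes $\T_{5,A},\T_{5,B}$ and $\T_{6,A},\T_{6,B}$ of Section~\ref{sec:examples}.

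\emph{Template condition.} The refined regions were rewritten above as
$\cR^{(5)}_1=\cQ_3(1,1)\cup\cQ_3(3,3)$ and $\cR^{(5)}_2=\cQ_3(4,4)\cup\cQ_3(6,6)$, and analogously with $\cQ_4$ for $d=6$. These are finite unions of translated $(d-2)\times(d-2)$ templates by Definition~\ref{def:template}. By Proposition~\ref{prop:channeltemplate}, this is equivalent to $\chl(\cR_k)\ge d-2$; \eqref{eq:qequalsdminus2} records that equality holds. Every selected cell at level two is a level-one cell, so these are admissible regions in the sense of Assumption~\ref{ass:stabletemplate} with $d-1$ replaced by $d-2$.

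\emph{No-vanishable.} Since $d-2\ge q_d=\lceil(d+1)/2\rceil$ for $d=5,6$ (values $3\ge3$ and $4\ge4$), Proposition~\ref{prop:minimaltemplate} shows that no vanishable T $l$-edges are created at either level. This is consistent with the direct vertex counts of seven and nine stated in the examples.

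\emph{Increase and instability.} The dimension sequences \eqref{eq:d5sequence} and \eqref{eq:d6sequence}, established in the appendices, show strict increase at each level for both meshes. The final dimensions differ: $150\ne149$ and $277\ne276$. The two meshes in each pair are structurally isomorphic, since they share cell indices and the midpoint rule and differ only in $x_1$, which preserves the order of grid lines. Definition~\ref{def:stability} therefore yields instability on $[\T_{5,A}]_{\mathrm I}$ and $[\T_{6,A}]_{\mathrm I}$, which is the content of Theorem~\ref{thm:main}.

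The only genuinely hard input is the exact dimension computation, the rank certificates in Appendices~\ref{app:d5}--\ref{app:d6}, which I take as given. For the corollary itself, the main point to check is that changing $x_1$ from $1$ to $11/10$ preserves structural isomorphism throughout both subdivision levels. This holds because midpoints of ordered knots remain ordered, so the coincidences and orderings of the supporting lines are unchanged.
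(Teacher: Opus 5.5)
Your proposal is correct and follows the paper's proof essentially verbatim. It uses the same witnesses $\T_{5,A},\T_{5,B}$ and $\T_{6,A},\T_{6,B}$, the rewriting of the regions as unions of translated $(d-2)\times(d-2)$ templates, the no-vanishable property, and the appendix dimension sequences showing strict increase but unequal final dimensions on structurally isomorphic meshes. The only differences are that you derive no-vanishability from Proposition~\ref{prop:minimaltemplate} via $d-2\ge q_d$, and you sketch why perturbing $x_1$ preserves structural isomorphism; the paper simply asserts both points. For the latter, the cleanest argument uses the map $g_u$ in \eqref{eq:perturbation-map}: it is increasing and affine on each level-zero interval, so it commutes with midpoint subdivision.
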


The justification of Proposition~\ref{prop:stable-template} and
the proof of Corollary~\ref{cor:dminus2} are given in
Appendix~\ref{app:templates}.

For $d\ge3$, the adopted condition also excludes vanishable
T $l$-edges because $d-1\ge q_d$. When $d=2$, the template
has size $1\times1$, so the assumption allows arbitrary levelwise
cross subdivision. Stability in this case follows from the known
biquadratic dimension formula \cite{DengChenJin2013,ZengEtAl2015},
and does not require the exclusion of vanishable T $l$-edges.

The four examples and the known stability result justify
Assumption~\ref{ass:stabletemplate} as a reasonable mesh
assumption for the $\PHtwoT$-spline series.
It ensures the dimensional stability required by
Section~\ref{sec:construction-suitable}; the subsequent papers
will construct the basis and establish its properties under
this assumption. We do not claim that this condition is
necessary for every stable hierarchical T-mesh, or that
$d-1$ is the smallest admissible template side in every degree.

\section{Conclusion}\label{sec:conclusion}

In this paper, we have studied a mesh assumption for the construction
of $\PHtwoT$-splines of bi-degree $(d,d)$ with the
highest order of smoothness over hierarchical T-meshes. We introduced
construction-suitable T-meshes and proved that dimensional stability
is necessary for a basis construction compatible with the mesh
structure. Four examples were then considered. The bicubic and
biquartic examples show that unrestricted hierarchical refinement may
introduce edges without increasing the dimension, and that the
dimension need not be stable. The examples of degrees five and six
show that excluding vanishable T $l$-edges is still insufficient,
even when every refinement level increases the dimension.

By introducing template translations and channel length, we expressed
the refinement conditions in terms of the arrangement of the selected
cells. For $d=5,6$, the examples satisfy the
$(d-2)\times(d-2)$ template condition but have unstable dimensions.
Together with the known stability result in \cite{HuangChen2026},
these examples justify the $(d-1)\times(d-1)$ template condition as
a reasonable mesh assumption for the subsequent basis construction.
The condition is sufficient for dimensional stability; it is not
claimed to be necessary for every stable hierarchical T-mesh or
optimal in every degree. The construction and properties of the
$\PHtwoT$-spline basis will be studied in the
subsequent parts of this series.

\appendix

\section{Conformality matrices and dimension calculations}
\label{app:conformality}\label{sec:conformality}

This appendix describes the reductions used to calculate the
dimensions in Section~\ref{sec:instability-assumption}. We use the
smoothing cofactor method and the decomposition of T-connected
components in \cite{LiDeng2016,HuangChen2024}. Row and column
indices in the explicit matrix certificates start at zero.

\subsection{The conformality matrix}

Let $n_{\mathrm T}$ be the number of T $l$-edges of $\T$, and let
$n_{\mathrm L}$ be the number of distinct vertices on $L(\T)$.
Collecting the equations
\eqref{eq:moment} for all T $l$-edges gives
\[
  M(\T)\boldsymbol\delta=0,
  \qquad M(\T)\in\R^{n_{\mathrm T}(d+1)\times n_{\mathrm L}}.
\]
Each vertex of $L(\T)$ corresponds to one column, including a vertex
shared by two T $l$-edges. For $w=(s_w,t_w)$, the entries are
\begin{equation}\label{eq:matrix-entries}
  M_{(\ell,k),w}=
  \begin{cases}
    s_w^k,&w\in\ell\text{ and }\ell\text{ is horizontal},\\
    t_w^k,&w\in\ell\text{ and }\ell\text{ is vertical},\\
    0,&w\notin\ell,
  \end{cases}
  \qquad k=0,\ldots,d.
\end{equation}
The kernel of $M(\T)$ is the conformality vector space associated
with $L(\T)$.

For the regular T-meshes considered in the four examples, there
are no rays and no homogeneous boundary conditions. The dimension
formula is \cite{LiDeng2016,HuangChen2024}
\begin{equation}\label{eq:dimformula}
  \dim S_d(\T)=(d+1)^2+c(d+1)+n_v-\rank M(\T),
\end{equation}
where $c$ is the number of cross-cuts and $n_v$ is the number of
all interior vertices. In general, $n_v\ne n_{\mathrm L}$. Only vertices on
$L(\T)$ are included in the columns of $M(\T)$; the remaining
$n_v-n_{\mathrm L}$ interior vertex cofactors are unconstrained
by this matrix.

\subsection{Vanishable edges and the CNDC}
\label{sec:decomposition-tools}

If an edge equation contains $r\le d+1$ remaining cofactor variables,
its Vandermonde matrix has full column rank. All these variables
are therefore zero. They are removed from every incident edge
equation, and the procedure is repeated until no further variables
are forced to zero. Each removed variable contributes one pivot
to the rank, not one free variable.

We recall the decomposition used when nonzero cofactor variables
remain. An order $\ell_1\succ\cdots\succ\ell_{n_{\mathrm T}}$ is
\emph{reasonable} if each $\ell_i$ has at least $d+1$ vertices after
its intersections with $\ell_1,\ldots,\ell_{i-1}$ have been removed.
A component admitting such an order is \emph{diagonalizable}
\cite{LiDeng2016}. Its conformality matrix has full row rank.

For a remaining component $L$, let $m(\ell\mid L)$ count the
vertices on $\ell$ that belong to no other edge of $L$. If
$m(\ell\mid L)\ge d+1$, the equations of $\ell$ can be eliminated
using $d+1$ of these mono-vertices. Remove $\ell$, retain its
intersection vertices on the other edges, and repeat. The
extracted edges form a diagonalizable component in reverse
extraction order, with the shared vertices assigned to the
remaining component. If the process
terminates with a nonempty component $L_{\mathrm C}$, then every
edge of $L_{\mathrm C}$ has fewer than $d+1$ mono-vertices. This
is the completely non-diagonalizable component (CNDC) of the
complete partition in \cite{HuangChen2024}.

An extracted edge with $m$ relative mono-vertices contributes
$d+1$ pivots and $m-d-1$ free variables. Consequently, extracting
$e$ edges gives
\begin{equation}\label{eq:rankpartition}
  \rank M=e(d+1)+\rank M(L_{\mathrm C}),
\end{equation}
when no prior vanishable-edge reduction is needed. Pivots from
any earlier forced-zero reduction are added separately.

\subsection{Elimination of mono-vertices}

Let $A_\ell=\{a_1,\ldots,a_{\mu_\ell}\}$ be the tangential
coordinates of the mono-vertices on a remaining edge $\ell$,
where $\mu_\ell\le d$. Set
\[
  p_\ell(z)=\prod_{a\in A_\ell}(z-a),
\]
with the empty product equal to one. Here $z=s$ for a horizontal edge and $z=t$ for a vertical edge.
Eliminating the mono-vertex variables leaves the matrix
\begin{equation}\label{eq:condensedentry}
  K_{(\ell,k),w}=
  \begin{cases}
    z_w^k p_\ell(z_w),&w\in\ell,\\
    0,&w\notin\ell,
  \end{cases}
  \qquad k=0,\ldots,d-\mu_\ell,
\end{equation}
whose columns correspond to the remaining multi-vertices.

To justify the reduction, complete
$p_\ell(z),zp_\ell(z),\ldots,z^{d-\mu_\ell}p_\ell(z)$
by the $\mu_\ell$ Lagrange polynomials at the mono-vertices.
These polynomials form a basis of $\mathbb P_d$. The associated
change of row basis produces an identity block on the mono-columns,
while the other rows vanish on those columns. Column operations
then eliminate the entries in the pivot rows outside this identity
block. Thus each mono-vertex contributes one pivot. Mono-columns
of different edges are disjoint, so the reductions can be performed
independently.

For the examples without vanishable edges, let
$\mu=\sum_{\ell\in L_{\mathrm C}}\mu_\ell$ and
$\rho=e(d+1)+\mu$. The full reduction has the form
\begin{equation}\label{eq:blockreduction}
  M\sim
  \begin{pmatrix}
    I_\rho&0&0\\
    0&K&0
  \end{pmatrix},
  \qquad \rank M=\rho+\rank K.
\end{equation}
The last block of columns records the free variables of the
extracted component. These columns must be retained when specifying
the size of the full matrix.

The first refinement levels of the degree-five and degree-six
examples have a particularly simple reduced system. On a complete
$r\times r$ array of crossings, suppose that the equations are
\[
  \sum_{i=1}^r \omega_i\delta_{ij}=0\quad(1\le j\le r),
  \qquad
  \sum_{j=1}^r \vartheta_j\delta_{ij}=0\quad(1\le i\le r),
\]
with all $\omega_i,\vartheta_j$ nonzero. Setting
$\zeta_{ij}=\omega_i \vartheta_j\delta_{ij}$ reduces these equations to zero
row and column sums. The total row sum equals the total column sum. Conversely,
the first $r-1$ row sums and all $r$ column sums are independent,
so the rank is $2r-1$. This calculation is
independent of the nonzero weights.

\subsection{Exact rank certificates}
\label{app:rank-certificates}

To establish the rank of a rational matrix, both an upper and a
lower bound are needed. Let $\mathcal I,\mathcal J$ select a nonsingular submatrix
$A=K[\mathcal I,\mathcal J]$ of order $r$, and put
\[
  B=K[\mathcal I,\mathcal J^c],\qquad C=K[\mathcal I^c,\mathcal J],\qquad D=K[\mathcal I^c,\mathcal J^c].
\]
All index sets are ordered increasingly, and complements are taken
in the full row or column index set of $K$. Block elimination gives
\begin{equation}\label{eq:schur-rank}
  \rank K=r+\rank(D-CA^{-1}B).
\end{equation}
Thus $\det A\ne0$ together with
\begin{equation}\label{eq:schur-zero}
  D-CA^{-1}B=0
\end{equation}
proves $\rank K=r$. The certificates below use exact rational
arithmetic. No numerical rank tolerance is involved.

For compactness, write
\[
  H(q;a,b)=\{(x,q):a\le x\le b\},\qquad
  V(q;a,b)=\{(q,y):a\le y\le b\}.
\]
Unless otherwise stated, horizontal edges precede vertical edges
and are ordered by ordinate; vertical edges are ordered by
abscissa. Rows within an edge block follow increasing $k$ in
\eqref{eq:condensedentry}, and multi-vertices are ordered
lexicographically by their coordinates.

\section{The bicubic example}\label{app:d3}

We prove Theorem~\ref{thm:d3} for the meshes defined by
\eqref{eq:d3knots}--\eqref{eq:d3R3}. The final mesh has nineteen
T $l$-edges, eight cross-cuts and eighty-six interior vertices.
Seventy vertices lie on the T $l$-edges. Hence
\[
  M_3(\tau)\in\R^{76\times70},\qquad
  \dim S_3(\T_3(\tau))=134-\rank M_3(\tau).
\]

At $\tau=1$, successively eliminate the edges on
\[
  y=\frac32,\frac{17}8,\frac{23}8,\frac{25}8,
    \frac{13}4,\frac{27}8,
  \qquad x=\frac32,\frac{13}4,
\]
in the displayed order. The numbers of newly forced-zero variables
are $3,3,3,3,4,3,3,3$, respectively, totaling twenty-five.
The corresponding edges give the same elimination for every
$\tau>0$. The eleven remaining edges have twenty mono-vertices
and twenty-five multi-vertices. Therefore,
\begin{equation}\label{eq:d3reduction}
  M_3(\tau)\sim
  \begin{pmatrix}
    I_{45}&0\\
    0&K_3(\tau)\\
    0_{7\times45}&0_{7\times25}
  \end{pmatrix},
  \qquad K_3(\tau)\in\R^{24\times25}.
\end{equation}
In particular,
\begin{equation}\label{eq:d3nullity}
  \dim S_3(\T_3(\tau))=64+\dim\ker K_3(\tau).
\end{equation}

Use the affine coordinates
\begin{equation}\label{eq:d3affine}
  \xi=8x-8(\tau-1),\qquad \eta=8y.
\end{equation}
An invertible affine change of tangential coordinate does not
change the rank of an edge equation block. In these coordinates,
all remaining multi-vertices are independent of $\tau$.
Table~\ref{tab:d3blocks} gives the row blocks of $K_3(\tau)$,
and Table~\ref{tab:d3vectors} specifies the column order.
Here $H_q$ lies on $\eta=q$ and $V_q$ lies on $\xi=q$.
Substitution into \eqref{eq:condensedentry} completely specifies
the matrix.

\begin{table}[htbp]
\centering\small
\caption{Edge blocks of $K_3(\tau)$.}\label{tab:d3blocks}
\begin{tabular}{@{}cllc@{}}
\toprule
Edge & $A_\ell$ & Multi-vertex coordinates & Rows\\
\midrule
$H_{18}$&$\{24\}$&$20,21,22,23,28$&$0$--$2$\\
$H_{19}$&$\{24\}$&$20,21,22,23$&$3$--$5$\\
$H_{20}$&$\{16,24,32\}$&$20,21,22,23,28$&$6$\\
$H_{21}$&$\{24\}$&$20,21,22,23$&$7$--$9$\\
$H_{22}$&$\{24\}$&$20,21,22,23$&$10$--$12$\\
$H_{28}$&$\{16-8\tau,16,24\}$&$20,22,23$&$13$\\
$V_{20}$&$\{16,24,32\}$&$18,19,20,21,22,28$&$14$\\
$V_{21}$&$\{16\}$&$18,19,20,21,22$&$15$--$17$\\
$V_{22}$&$\{16,24\}$&$18,19,20,21,22,28$&$18$--$19$\\
$V_{23}$&$\{24\}$&$18,19,20,21,22,28$&$20$--$22$\\
$V_{28}$&$\{8,16,24\}$&$18,20$&$23$\\
\bottomrule
\end{tabular}
\end{table}

\begin{table}[htbp]
\centering\small
\caption{Multi-vertices and kernel vectors for $K_3$.}
\label{tab:d3vectors}
\setlength{\tabcolsep}{11pt}
\begin{tabular}{@{}rrrrr@{}}
\toprule
$j$&$\xi_j$&$\eta_j$&$a_j$&$b_j$\\
\midrule
0&20&18&-1&-440\\
1&20&19&4&800\\
2&20&20&-6&-565\\
3&20&21&4&160\\
4&20&22&-1&0\\
5&20&28&0&0\\
6&21&18&4&1600\\
7&21&19&-16&-3200\\
8&21&20&24&2400\\
9&21&21&-16&-640\\
10&21&22&4&0\\
11&22&18&-6&-2080\\
12&22&19&24&4800\\
13&22&20&-36&-3900\\
14&22&21&24&960\\
15&22&22&-6&0\\
16&22&28&0&-20\\
17&23&18&4&1088\\
18&23&19&-16&-3200\\
19&23&20&24&2880\\
20&23&21&-16&-640\\
21&23&22&4&0\\
22&23&28&0&32\\
23&28&18&0&-8\\
24&28&20&0&5\\
\bottomrule
\end{tabular}
\end{table}

Only row $13$ depends on $\tau$. Its nonzero entries, at the
columns corresponding to $(20,28),(22,28),(23,28)$, are
\begin{equation}\label{eq:d3parameterrow}
  -64-128\tau,\qquad -72-96\tau,\qquad -49-56\tau.
\end{equation}
Delete this row to obtain the constant matrix
$C_3\in\mathbb Z^{23\times25}$. For
$\mathcal J_C=\{0,1,\ldots,20,22,23\}$, exact elimination gives
\begin{equation}\label{eq:d3minor}
  \det C_3[:,\mathcal J_C]=-2^{58}3^{10}\cdot5\ne0.
\end{equation}
Thus $\rank C_3=23$. The vectors $\boldsymbol a,\boldsymbol b$ in
Table~\ref{tab:d3vectors} satisfy
\begin{equation}\label{eq:d3vectors}
  C_3\boldsymbol a=C_3\boldsymbol b=0,\qquad K_3(\tau)\boldsymbol a=0,\qquad
  K_3(\tau)\boldsymbol b=128(\tau-1)e_{13},
\end{equation}
where $e_{13}$ is the coordinate vector for row $13$.
These identities follow directly from the two tables. For example,
the only potentially nonzero entry of $K_3(\tau)\boldsymbol b$ is
\[
  (-72-96\tau)(-20)+(-49-56\tau)32=128(\tau-1).
\]
Since $\boldsymbol a\ne0$, $a_{24}=0$ and $b_{24}=5$, the two vectors are
independent and span $\ker C_3$. Therefore
\[
  \rank K_3(\tau)=
  \begin{cases}
    23,&\tau=1,\\
    24,&\tau>0,\ \tau\ne1.
  \end{cases}
\]
Together with \eqref{eq:d3nullity}, this proves
Theorem~\ref{thm:d3}. As an additional check, for
$\mathcal J_K=\mathcal J_C\cup\{24\}$ in increasing order,
\begin{equation}\label{eq:d3fullminor}
  \det K_3(\tau)[:,\mathcal J_K]=-2^{65}3^{10}(\tau-1).
\end{equation}

The stage counts underlying Table~\ref{tab:d3stages} are listed
below. At levels one and two, repeated forced-zero elimination
removes all cofactor variables, giving full column ranks $21$
and $38$. At level three, \eqref{eq:d3reduction} gives ranks
$68$ and $69$.

\begin{table}[htbp]
\centering\small
\caption{Conformality counts for the bicubic example.}
\label{tab:d3-rank-counts}
\begin{tabular}{@{}crrrrrc@{}}
\toprule
Level&$n_{\mathrm T}$&$c$&$n_v$&$n_{\mathrm L}$&Rows of $M$&
$\rank M$ for $\tau=1,2$\\
\midrule
0&0&8&16&0&0&$0,0$\\
1&6&8&37&21&24&$21,21$\\
2&11&8&54&38&44&$38,38$\\
3&19&8&86&70&76&$68,69$\\
\bottomrule
\end{tabular}
\end{table}

\section{The biquartic example}\label{app:d4}

We prove Theorem~\ref{thm:d4}. The mesh defined by
\eqref{eq:d4knots} and \eqref{eq:d4mask} has eight T $l$-edges,
ten cross-cuts and fifty-seven interior vertices. There are
thirty-two vertices on the T $l$-edges, so
\[
  M_4(\tau)\in\R^{40\times32},\qquad
  \dim S_4(\T_4(\tau))=132-\rank M_4(\tau).
\]
The four dashed T $l$-edges in Figure~\ref{fig:d4mesh} each have
three vertices. Their twelve cofactor variables are zero.
The remaining four edges are
\[
  H(5/2;2,5),\quad H(7/2;\tau,4),\quad
  V(5/2;2,5),\quad V(7/2;1,4).
\]
After the zero variables are removed, their mono-vertex coordinate
sets, in the same order, are
\[
  \{2,3,4,5\},\quad\{\tau,2,3,4\},\quad
  \{2,3,4,5\},\quad\{1,2,3,4\}.
\]
The sixteen mono-vertices can be eliminated. Order the four
remaining multi-vertices as
\[
  (5/2,5/2),\quad(5/2,7/2),\quad
  (7/2,5/2),\quad(7/2,7/2).
\]
Using \eqref{eq:condensedentry} and multiplying each resulting
row by $16/3$ gives
\begin{equation}\label{eq:d4K}
  K_4(\tau)=
  \begin{pmatrix}
    -5&0&3&0\\
    0&5-2\tau&0&-(7-2\tau)\\
    -5&3&0&0\\
    0&0&3&-5
  \end{pmatrix}.
\end{equation}
Thus
\begin{equation}\label{eq:d4reduction}
  M_4(\tau)\sim
  \begin{pmatrix}
    I_{28}&0\\
    0&K_4(\tau)\\
    0_{8\times28}&0_{8\times4}
  \end{pmatrix}.
\end{equation}
The determinant and a fixed minor satisfy
\[
  \det K_4(\tau)=60(\tau-1),\qquad
  \det K_4(\tau)[\{0,2,3\},\{0,1,2\}]=-45.
\]
Hence $\rank K_4(1)=3$ and $\rank K_4(\tau)=4$ for
$0<\tau<2$, $\tau\ne1$. At $\tau=1$, the vector
$(3,5,5,3)^{\mathsf T}$ spans the kernel. Consequently,
\[
  \dim S_4(\T_4(\tau))=104-\rank K_4(\tau),
\]
which proves Theorem~\ref{thm:d4}, including the two meshes
$\tau=1$ and $\tau=3/2$ displayed in the main text.

\section{The degree-five example}
\label{app:d5}\label{app:d5matrix}\label{app:reduction}

We calculate the dimensions in \eqref{eq:d5sequence} for the
subdivisions \eqref{eq:d5R1}--\eqref{eq:d5R2}. Put
$x_1=1+u/10$, where $u=0$ and $u=1$ give $\T_{5,A}$ and
$\T_{5,B}$, respectively. A vertex $(s,t)$ in the uniform mesh
corresponds to $(g_u(s),t)$ in the other mesh, where
\begin{equation}\label{eq:perturbation-map}
  g_u(s)=
  \begin{cases}
    (1+u/10)s,&0\le s\le1,\\
    s+(u/10)(2-s),&1\le s\le2,\\
    s,&s\ge2.
  \end{cases}
\end{equation}
This map specifies the vertex coordinates obtained from the
prescribed midpoint subdivisions. We use the same coordinate
convention for the degree-six example.

\subsection{The first refinement level}

The initial dimension is $(7+5)^2=144$. At level one, the
conformality matrix $M_5^1(u)$ has size $60\times61$. Extract
$H(7/2;1,6)$ and then $V(7/2;1,6)$, using their corresponding
positions when $u=1$. Their relative mono-vertex counts are six
and seven. They contribute twelve pivots and one free variable.
The eight remaining edges have forty mono-vertices and eight
multi-vertices. After eliminating the mono-vertices, the crossings
form two disjoint $2\times2$ arrays. On each array, the horizontal
edges have the same mono-coordinate set, and so do the vertical
edges. The weighted row-and-column calculation in
Appendix~\ref{app:conformality}
therefore gives rank three for each array. Thus
\[
  \rank M_5^1(u)=12+40+3+3=58.
\]
Since $c=12$ and $n_v=97$, \eqref{eq:dimformula} gives
$\dim S_5(\T_{5,A}^1)=\dim S_5(\T_{5,B}^1)=147$.
Here the superscript denotes the refinement level.

\subsection{The final mesh and its reduced matrix}

The final mesh has twenty T $l$-edges and $122$ vertices on their
union. Extract the same central horizontal and vertical edges.
Their relative mono-vertex counts are again six and seven.
The remaining eighteen edges have at most five mono-vertices
each and form the CNDC. They contain seventy-two mono-vertices
and thirty-seven multi-vertices. Consequently,
\begin{equation}\label{eq:d5red}
  M_5(u)\sim
  \begin{pmatrix}
    I_{84}&0&0_{84\times1}\\
    0&K_5(u)&0_{36\times1}
  \end{pmatrix},
  \qquad K_5(u)\in\Q^{36\times37}.
\end{equation}

Table~\ref{tab:d5blocks} specifies the reduced matrix in the
uniform geometry. Each row gives a horizontal edge $H(q;a,b)$
and a vertical edge $V(q;a,b)$. Their tangential mono-coordinate
sets are both $A$. The multi-vertices are precisely the
intersections of the listed horizontal and vertical segments.
For $u=1$, transform all $x$-coordinates by
\eqref{eq:perturbation-map}; the $y$-coordinates do not change.
This prescription, the row ranges in the table and the
lexicographic column order specify every entry through
\eqref{eq:condensedentry}.

\begin{table}[htbp]
\centering\small
\caption{Edge data for $K_5(0)$.}\label{tab:d5blocks}
\begin{tabular}{@{}cclcc@{}}
\toprule
$q$&$[a,b]$&$A$&Horizontal rows&Vertical rows\\
\midrule
$3/2$&$[1,4]$&$\{1,2,3,7/2,4\}$&$0$&$18$\\
$9/4$&$[2,7/2]$&$\{2,3,7/2\}$&$1$--$3$&$19$--$21$\\
$5/2$&$[1,4]$&$\{1,2,3,7/2,4\}$&$4$&$22$\\
$11/4$&$[2,7/2]$&$\{2,3,7/2\}$&$5$--$7$&$23$--$25$\\
$13/4$&$[2,9/2]$&$\{2,3,7/2,4\}$&$8$--$9$&$26$--$27$\\
$15/4$&$[3,9/2]$&$\{3,7/2,4\}$&$10$--$12$&$28$--$30$\\
$17/4$&$[3,9/2]$&$\{3,7/2,4\}$&$13$--$15$&$31$--$33$\\
$9/2$&$[3,6]$&$\{3,7/2,4,5,6\}$&$16$&$34$\\
$11/2$&$[3,6]$&$\{3,7/2,4,5,6\}$&$17$&$35$\\
\bottomrule
\end{tabular}
\end{table}

\subsection{Rank certificates and dimensions}

For $u=0$, choose $\mathcal I_0,\mathcal J_0$ through their complements
\[
  \mathcal I_0^c=\{25,32,33,35\},\qquad
  \mathcal J_0^c=\{8,18,32,33,36\}.
\]
For $u=1$, choose
\[
  \mathcal I_1^c=\{25,33,35\},\qquad
  \mathcal J_1^c=\{8,18,33,36\}.
\]
The row and column universes are $\{0,\ldots,35\}$ and
$\{0,\ldots,36\}$. Exact evaluation gives
\begin{align}
  \det K_5(0)[\mathcal I_0,\mathcal J_0]
    &=-\frac{3^{39}5^{15}\cdot7\cdot11}{2^{189}},
      \label{eq:d5minor0}\\
  \det K_5(1)[\mathcal I_1,\mathcal J_1]
    &=-\frac{3^{39}5^{10}7^2\cdot13\cdot29}{2^{191}}.
      \label{eq:d5minor1}
\end{align}
The corresponding Schur complements in \eqref{eq:schur-zero}
are exactly $0_{4\times5}$ and $0_{3\times4}$. Hence
\begin{equation}\label{eq:d5Kranks}
  \rank K_5(0)=32,\qquad \rank K_5(1)=33.
\end{equation}
Adding the eighty-four pivots in \eqref{eq:d5red} gives full
ranks $116$ and $117$. The final mesh has $c=12$ and $n_v=158$,
so
\[
  \dim S_5(\T_{5,A})=266-116=150,\qquad
  \dim S_5(\T_{5,B})=266-117=149.
\]
Together with the first-level calculation, this proves
\eqref{eq:d5sequence}. The extra zero column in \eqref{eq:d5red}
accounts for the free variable of the extracted component.

\section{The degree-six example}
\label{app:d6}\label{app:d6matrix}

We use the subdivisions \eqref{eq:d6R1}--\eqref{eq:d6R2} and
again put $x_1=1+u/10$, with $u=0,1$. The correspondence between
vertex coordinates is given by \eqref{eq:perturbation-map}.

\subsection{The first refinement level}

The initial dimension is $(10+6)^2=256$. At level one, extract
$H(9/2;1,8)$ and then $V(9/2;1,8)$. Their relative mono-vertex
counts are eight and nine, contributing fourteen pivots and
three free variables. The twelve remaining edges have seventy-two
mono-vertices and eighteen multi-vertices. After mono-vertex
elimination, the crossings form two disjoint $3\times3$ arrays.
As in the degree-five calculation, each array has rank five.
Thus the level-one conformality matrix $M_6^1(u)$, of size
$98\times107$, satisfies
\[
  \rank M_6^1(u)=14+72+5+5=96
\]
for both geometries. Since $c=18$ and $n_v=188$, the dimension
at this level is $49+18\cdot7+188-96=267$.

\subsection{The final mesh and its reduced matrix}

The final mesh has twenty-eight T $l$-edges and $214$ vertices
on their union. Extract the same central horizontal and vertical
edges. Their relative mono-vertex counts remain eight and nine.
The other twenty-six edges have at most six relative mono-vertices
each and form the CNDC. It contains $124$ mono-vertices and
seventy-three multi-vertices. Therefore,
\begin{equation}\label{eq:d6red}
  M_6(u)\sim
  \begin{pmatrix}
    I_{138}&0&0_{138\times3}\\
    0&K_6(u)&0_{58\times3}
  \end{pmatrix},
  \qquad K_6(u)\in\Q^{58\times73}.
\end{equation}
Table~\ref{tab:d6blocks} specifies its edge blocks using the same
paired horizontal--vertical convention as Table~\ref{tab:d5blocks}.
Together with \eqref{eq:perturbation-map} and
\eqref{eq:condensedentry}, it determines both matrices exactly.

\begin{table}[htbp]
\centering\small
\caption{Edge data for $K_6(0)$.}\label{tab:d6blocks}
\begin{tabular}{@{}cclcc@{}}
\toprule
$q$&$[a,b]$&$A$&Horizontal rows&Vertical rows\\
\midrule
$3/2$&$[1,5]$&$\{1,2,3,4,9/2,5\}$&$0$&$29$\\
$5/2$&$[1,5]$&$\{1,2,3,4,9/2,5\}$&$1$&$30$\\
$13/4$&$[3,5]$&$\{3,4,9/2,5\}$&$2$--$4$&$31$--$33$\\
$7/2$&$[1,5]$&$\{1,2,3,4,9/2,5\}$&$5$&$34$\\
$15/4$&$[3,5]$&$\{3,4,9/2,5\}$&$6$--$8$&$35$--$37$\\
$17/4$&$[3,5]$&$\{3,4,9/2,5\}$&$9$--$11$&$38$--$40$\\
$19/4$&$[3,13/2]$&$\{3,4,9/2,5,6\}$&$12$--$13$&$41$--$42$\\
$21/4$&$[9/2,13/2]$&$\{9/2,5,6\}$&$14$--$17$&$43$--$46$\\
$11/2$&$[4,8]$&$\{4,9/2,5,6,7,8\}$&$18$&$47$\\
$23/4$&$[9/2,13/2]$&$\{9/2,5,6\}$&$19$--$22$&$48$--$51$\\
$25/4$&$[9/2,13/2]$&$\{9/2,5,6\}$&$23$--$26$&$52$--$55$\\
$13/2$&$[4,8]$&$\{4,9/2,5,6,7,8\}$&$27$&$56$\\
$15/2$&$[4,8]$&$\{4,9/2,5,6,7,8\}$&$28$&$57$\\
\bottomrule
\end{tabular}
\end{table}

\subsection{Rank certificates and dimensions}

Use the row universe $\{0,\ldots,57\}$ and the column universe
$\{0,\ldots,72\}$. For $u=0$, let
\begin{align*}
  \mathcal I_0^c&=\{40,55,57\},\\
  \mathcal J_0^c&=\{4,5,12,14,26,27,31,32,46,61,62,65,66,67,68,69,71,72\}.
\end{align*}
For $u=1$, let
\begin{align*}
  \mathcal I_1^c&=\{40,55\},\\
  \mathcal J_1^c&=\{4,5,12,14,26,27,31,32,61,62,65,66,67,68,69,71,72\}.
\end{align*}
The selected minors have orders fifty-five and fifty-six, with
\begin{align}
  \det K_6(0)[\mathcal I_0,\mathcal J_0]
    &=-\frac{3^{72}5^{38}7^{18}\cdot11\cdot13^2}{2^{377}},
      \label{eq:d6minor0}\\
  \det K_6(1)[\mathcal I_1,\mathcal J_1]
    &=-\frac{3^{80}5^{22}7^{18}\cdot11\cdot13^2\cdot19
                \cdot23^2\cdot29^2\cdot59^2}{2^{396}}.
      \label{eq:d6minor1}
\end{align}
Their Schur complements are exactly $0_{3\times18}$ and
$0_{2\times17}$, respectively. By \eqref{eq:schur-rank},
\begin{equation}\label{eq:d6Kranks}
  \rank K_6(0)=55,\qquad \rank K_6(1)=56.
\end{equation}
Thus the full ranks are $138+55=193$ and $138+56=194$.
The final mesh has $c=18$ and $n_v=295$, yielding
\[
  \dim S_6(\T_{6,A})=470-193=277,\qquad
  \dim S_6(\T_{6,B})=470-194=276.
\]
This proves \eqref{eq:d6sequence}. The three zero columns in
\eqref{eq:d6red} retain the free variables of the extracted
component. Together with Appendix~\ref{app:d5}, the calculation
also proves Theorem~\ref{thm:main}.

\section{The template conditions}\label{app:templates}

\subsection{Channel length and template translations}

\begin{proof}[Proof of Proposition~\ref{prop:channeltemplate}]
Suppose $\chl(\cR)\ge m$. Every selected cell index $\boldsymbol q\in\cR$ belongs to a
square template of side at least $m$ contained in $\cR$.
Within that template, choose an $m\times m$ subtemplate containing
$\boldsymbol q$. The union of these subtemplates is $\cR$. Conversely,
if $\cR$ is a union of translated $m\times m$ templates,
every cell belongs to one of them, so $\chl(\cR)\ge m$.
Taking the largest such $m$ proves \eqref{eq:channelmax}.
\end{proof}

\subsection{Exclusion of vanishable edges}

\begin{proof}[Proof of Proposition~\ref{prop:minimaltemplate}]
Consider a maximal horizontal sequence of $r$ consecutive cells
selected for subdivision. Since every selected cell belongs to a
contained $q_d\times q_d$ template, the sequence has $r\ge q_d$.
Its new horizontal midline meets $r+1$ vertical grid lines and
$r$ newly inserted vertical midlines. Hence the corresponding
new T $l$-edge has at least $2r+1$ vertices. As
\[
  2r+1\ge2q_d+1\ge d+2,
\]
the edge is non-vanishable. The vertical case is identical.
Any continuation of the segment adds vertices and cannot invalidate
the bound.

For minimality, cross-subdivide an isolated interior
$m\times m$ submesh with $m<q_d$. Each new T $l$-edge has
exactly $2m+1\le d+1$ vertices and is vanishable. Thus no smaller
uniform square-template side gives the stated guarantee.
\end{proof}

\subsection{The adopted mesh assumption}

\begin{proof}[Justification of Proposition~\ref{prop:stable-template}]
First let $d\ge3$. By Proposition~\ref{prop:channeltemplate},
Assumption~\ref{ass:stabletemplate} means that each level is obtained
by cross-subdividing a collection of $(d-1)\times(d-1)$
tensor-product cell submeshes, with overlaps permitted.
Since $d-1\ge q_d$, these subdivisions introduce no vanishable
T $l$-edges. The dimension formula in
\cite[Theorem~4.2]{HuangChen2026} therefore applies. Its mesh counts
and numbers of isolated subdivision components are unchanged on the
admissible structurally isomorphic class with prescribed subdivisions.
Hence the dimension is stable.

When $d=2$, the $1\times1$ template condition imposes no additional
restriction on the selected cells. The biquadratic dimension formula
for hierarchical T-meshes in \cite{DengChenJin2013,ZengEtAl2015}
depends only on vertex and edge counts and the isolated subdivisions
at the successive levels. These quantities are also unchanged on
the specified class. This proves the assertion for $d=2$.
\end{proof}

\begin{proof}[Proof of Corollary~\ref{cor:dminus2}]
For $d=5$ and $d=6$, the regions in
\eqref{eq:d5R1}--\eqref{eq:d6R2} are generated by translated
$(d-2)\times(d-2)$ templates at both levels. They introduce no
vanishable T $l$-edges. Appendices~\ref{app:d5} and~\ref{app:d6}
show that each refinement level increases the dimension, but the
final dimensions differ on two structurally isomorphic meshes.
Thus the smaller template condition does not ensure stability.
\end{proof}

\bibliographystyle{unsrt}
\bibliography{PH2T_PartI}

\end{document}